\documentclass{amsart}

\usepackage[dvipsnames]{xcolor}
\usepackage{tikz}
\usepackage{amsmath,bm,bbm,amsthm, amssymb}
\usepackage{mathtools}
\usepackage{fullpage}
\usepackage{color}
\usepackage{dsfont}
\usepackage{animate}
\usepackage{etoolbox}
\usepackage{comment}
\usepackage{pgf}
\usepackage{cite}

\theoremstyle{plain}
\newtheorem{theorem}{Theorem}

\newtheorem{corollary}[theorem]{Corollary}
\newtheorem{lemma}[theorem]{Lemma}

\theoremstyle{definition}
\newtheorem{definition}[theorem]{Definition}

\theoremstyle{remark}
\newtheorem{remark}[theorem]{Remark}

\def\E{\mathbb E}
\def\P{\mathbb P}
\def\R{\mathbb R}
\def\one{\mathds1}
\def\Cov{\mathsf{Cov}}
\def\d{\mathrm d}

\begin{document} 
\author{Patrick Lopatto}
\address{Department of Statistics and Operations Research, University of North Carolina at Chapel Hill, United States}
\email{lopatto@unc.edu}
\author{Moritz Otto}
\address{Mathematical Institute, Leiden University, The Netherlands}
\email{m.f.p.otto@math.leidenuniv.nl}

\subjclass[2010]{Primary 60K35. Secondary 60G55, 60D05.}
\keywords{coupling method, determinantal process, Ginibre process, Kantorovich--Rubinstein distance, Palm calculus, Poisson approximation, scaling limit}

\title{Maximum gap in complex Ginibre matrices} 
\date{\today}
\begin{abstract} 
We determine the asymptotic size of the largest gap between bulk eigenvalues in complex Ginibre matrices. 
\end{abstract}
\maketitle

\section{Introduction}\label{sec:intro}

Random matrices have been studied for decades due to their rich mathematical structure and applications in quantum physics, statistics, and engineering. The behavior of their eigenvalues, and especially the spacings between eigenvalues, is a central topic. Due to recent advances, there is now a mature theory that identifies the distribution of a single eigenvalue gap for a broad class of random matrix models \cite{erdos2019matrix,erdHos2017dynamical}. 
However, much is less known about the largest and smallest eigenvalue gaps, in part because studying these observables requires understanding the entire collection of gaps simultaneously. Besides being fundamental examples in the extreme value theory of strongly correlated systems, extremal eigenvalue gaps have attracted attention due to a conjectural link to the Riemann zeta function. 
Numerical evidence suggests that the largest and smallest spacings of zeros of the zeta function and the corresponding extremal eigenvalue gaps are identically distributed (after appropriate rescaling) \cite{odlyzko1987distribution,ben2013extreme}. This connection can be seen as an extremal version of Montgomery's pair correlation conjecture \cite{montgomery1973pair}. 

In the literature on extremal eigenvalue gaps, the majority of works have concentrated on models with one-dimensional spectrum. The classical Gaussian random matrix ensembles, which include the Gaussian Orthogonal Ensemble (GOE), Gaussian Unitary Ensemble (GUE), and Gaussian Symplectic Ensemble (GSE), are paradigmatic 
examples. 
The correlation functions for these ensembles admit explicit formulas, allowing their eigenvalue behavior to be studied through direct computations. As a result, the distributions of the smallest gaps are known for each of the classical Gaussian ensembles \cite{vinson2001closest,ben2013extreme,feng2019small,feng2024small}. The problem of determining the distributions of the largest gaps seems more challenging. Currently, these distributions are known only for the GUE; they were determined in \cite{feng2018large}, following earlier work in \cite{ben2013extreme} that obtained the asymptotic size of the largest gaps. Further, it was shown in \cite{bourgade2021extreme} that these results on the largest and smallest gaps are universal in the sense that they hold for all Wigner matrices, assuming the entry distributions posses a density satisfying a weak smoothness condition.\footnote{Roughly speaking, Wigner matrices are symmetric random matrices whose upper-triangular entries are independent with mean zero and variance one.}
In the case of the largest gap, universality without the smoothness condition was shown in \cite{landon2020comparison}; this result encompasses, for example, Bernoulli random matrices. 
Very recently, universality for the smallest gap was extended to a large class of matrices containing non-smooth distributions, include those with sufficiently many atoms \cite{zhang2025quantitative}.

The non-symmetric analogues of the classical Gaussian ensembles are the Ginibre ensembles \cite{ginibre1965statistical,byun2024progress}, including the orthogonal Ginibre ensemble (GinOE) and unitary Ginibre ensemble (GinUE). These have two-dimensional spectra, each asymptotically supported on the unit disk in the complex plane.
The smallest gaps distribution of the GinUE was determined in \cite{shi2012smallest}, and the analogous result for the GinOE was established in \cite{lopatto2024smallest}. However, even the asymptotic size of the largest gaps for such ensembles has not been rigorously understood, much less their asymptotic distributions. In this paper, we determine the asymptotic order of the largest gap between bulk eigenvalues of the GinUE, including the leading constant factor.  To the best of our knowledge, this is the first such result for any random matrix ensemble with independent, identically distributed entries. 

\subsection{Proof ideas} We consider an auxiliary point process, denoted $\Xi_{n}$ below, that consists of the set of $n$ GinUE eigenvalues thinned to retain only those points whose near-neighbor spacing is greater than a certain distance $r_{n}$, defined precisely below.    
Our main result, Theorem~\ref{th}, states that on any set $B\subset \mathbb{C}$ such that $\sup_{z \in B} |z|<1$, the Kantorovich--Rubinstein distance between $\Xi_{n}$ and a suitably chosen Poisson process $\zeta_c$ tends to zero as $n \rightarrow \infty$. Routine arguments then allow one to deduce the leading-order asymptotic for the nearest-neighbor spacing in $B$, stated in Corollary~\ref{cor:maxdistance}. 

Our approach is similar to \cite{O24}, which determined the leading-order asymptotic for the largest gap for the limiting process of the GinUE eigenvalues. However, \cite{O24} focused on deriving a general Poisson process approximation result for stabilizing functionals of determinantal point processes. The error terms in this approximation are relatively straightforward to bound for the limiting point process by exploiting its stationarity. Our emphasis is different. We rely on the simpler Poisson approximation result \cite[Theorem 3.1]{BSY21}, but face new technical challenges because the eigenvalue process of the GinUE is not stationary. 
In particular, we would like to apply \cite[Theorem 3.1]{BSY21} to $\Xi_{n}$, but that result requires to control the symmetric difference of $\Xi_{n}$ and its reduced Palm version at every $z \in B$, which seems difficult to accomplish. However, due to the stabilization property of the eigenvalue process, we can construct a coupling (with a certain monotonicity property) of these processes after restricting them to a set that omits a small ball around $z$. By adapting the proof of  \cite[Theorem 3.1]{BSY21}, we show that the small set where we cannot construct a coupling can be neglected, then bound the error terms arising on the complementary large set (without relying on stationarity, in contrast to \cite{O24}). These bounds rely on large deviations estimates for the probability that a given region contains at most two GinUE eigenvalues.

\subsection{Related works} The circular ensembles, which have one-dimensional spectra, are another class of classical matrix ensembles. The distribution of the smallest gaps for the CUE was derived in \cite{ben2013extreme}, along with the order of the largest gaps. The distributions of the largest gaps were found in \cite{feng2018large}. Small gaps of the circular $\beta$-ensembles for integer $\beta$, a more general model which specializes to the CUE in the case $\beta=2$, were studied in \cite{feng2021small}. The largest and smallest gaps for a multi-matrix ensemble that generalizes the GUE were studied in \cite{figalli2016universality}. For the spherical ensemble, a determinantal process of points on the sphere that can be realized as the stereographic projection of the eigenvalues of a certain random matrix, the asymptotic area of the largest empty cap was derived in \cite{alishahi2015spherical}.

In models where obtaining the full distribution of the smallest gap is challenging, one can instead ask for high-probability lower bounds. These have been derived for symmetric random matrices in 
\cite{nguyen2017random,lopatto2021tail} and matrices of independent entries in \cite{Ge2017EigenvalSpacing,Luh2021Eigenvectors}.
For the Coulomb gas model, which generalizes the GinUE, various separation estimates for the particles have been shown in \cite{ameur2018repulsion,ameur2023planar,thoma2022overcrowding, Ameur2024HeleShaw}, and recently the convergence of the smallest gaps process to a Poisson process was established in \cite{charlier2025smallest}, generalizing the GinUE result of \cite{shi2012smallest}. Largest gap behavior is closely related to existence of large zero-free regions, which have been studied in \cite{Charlier2024Annuli, adhikari2017hole, adhikari2018hole,charlier2023hole}. 

The limiting distribution of the $r$-th largest nearest-neighbor spacing between independent points in $\mathbb{R}^p$, for fixed $r, p \ge 1$, was determined in \cite{henze1982limit,henze1983asymptotischer}. Poisson convergence for these spacings was studied in \cite{cho,BSY21}, and the analogue for independent points in hyperbolic space was established in  \cite{otto2023large}.

\section{Model and main results} \label{sec:model}

We  work on the complex plane $\mathbb C$ equipped with its Borel $\sigma$-field $\mathcal B$. We denote by $\mathbf N$  the space of all $\sigma$-finite counting measures on $\mathbb{C}$ and by $\widehat{\mathbf{N}}$ the space of all finite counting measures on $\mathbb{C}$.
We equip $\mathbf N$ and $\widehat{\mathbf{N}}$ with their corresponding $\sigma$-fields $\mathcal N$ and $\widehat{\mathcal N}$, which are induced by the maps $\omega \mapsto \omega(B)$ for all $B \in \mathcal B$. A {\em point process} is a random
element $\nu$ of $\mathbf N$, defined over some fixed
probability space $(\Omega,\mathcal A,\P)$.
The {\em intensity measure} of $\nu$ is the
measure $\E[\nu]$ defined by $\E[\nu](B)=\E[\nu(B)]$ for all $B\in\mathcal B$. 
For all $z \in \mathbb{C}$ and $r>0$,  let $B_r(z)$ be the closed Euclidean ball with radius $r$ around $z$. For all $B \in \mathcal B$, we write $|B|$ for the Lebesgue measure of $B$.

Recall that the complex Ginibre matrix of dimension $n$ is an $n\times n$ matrix whose entries are independent, identically distributed random variables, with common distribution equal to the standard complex Gaussian distribution; this means that their real and imaginary parts are each Gaussian with mean zero and variance $1/2$. 
We let $\xi_n$ denote the point process formed by the $n$ eigenvalues of the $n$-dimensional Ginibre matrix.
We remark that with the scaling we have chosen, the largest modulus of an element of $\xi_n$ is roughly $\sqrt{n}$.

Recall that a point process $\nu$ is called a {\em determinantal point process with correlation kernel $K$} on $\mathbb{C}$ if for every $m\in \mathbb N$ and pairwise disjoint $A_1,\dots,A_m \in \mathcal B$, 
\begin{align*}
	\E [\nu(A_1)\cdots \nu(A_m)]=\int_{A_1 \times \cdots \times A_m} \det (K(z_i,z_j))_{i,j=1}^m \d z_1 \, \dots\, \d z_m, 
\end{align*} 
where $\d z_i$ denotes integration with respect to the Lebesgue measure on $\mathbb{C}$, $(K(z_i,z_j))_{i,j=1}^m$ is the $m\times m$-matrix with entry $K(z_i,z_j)$ at position $(i,j)$, and $\det M$ is the determinant of the complex-valued $m\times m$-matrix $M$. 
Then $\nu$ has correlation functions of all orders, and the $m$-th order correlation function $\rho_\nu^{(m)}$ is given by
\begin{equation*}
	\rho^{(m)}_\nu(z_1,\dots,x_m)=	\det (K(z_i,z_j))_{i,j=1}^m,\quad z_1,\dots,z_m \in \mathbb{C},\,\quad m \in \mathbb{N}.
\end{equation*}
It is well known (see, e.g.,  \cite[Theorem 4.3.10]{BKPV09}) that $\xi_n$ is a determinantal point process with kernel
\begin{equation}\label{e:ginkernel}
K_n(z,w)=\sum_{k=1}^n \varphi_k(z) \overline{\varphi_k(w)},\quad \text{where }\quad \varphi_k(z):=\frac{1}{\sqrt{\pi (k-1)!}} z^{k-1} e^{- \frac{1}{2} |z|^2}.
\end{equation}

In this article, we study the largest gap in the process $\xi_n$. To do so, we let $\kappa>1$ be a parameter and consider the point process-valued function
\begin{align}
	\Xi_{n}[\omega]:=\sum_{z \in \omega \cap B_{\sqrt n}(0)} \one\{(\omega \setminus \{z\}) (B_{r_{n}(z)}(z))=0\} \,\delta_{z/\sqrt n}, \quad \omega \in \mathbf N, \label{def:Xin}
\end{align}
where the sequence $(r_{n}(z))_{n \in \mathbb N}$ is given by
\begin{align}
r_{n}(z):=\inf\{r>0:\, n \P(\xi_n^{z!}(B_{r}(z))=0) \rho_n(z)\le \kappa \},\quad z \in \mathbb C.\label{def:rcn}
\end{align}
Here $\rho_n(z):=K_n(z,z)$ is the one-point correlation function for $\xi_n$, and $\xi_n^{z!}$ denotes the reduced Palm process of $\xi_n$ at $z$ (see Section \ref{sec:prelim} for its definition).
We define  $\Xi_{n}=\Xi_{n}[\xi_n]$ and let $L_n(A)= \E [\Xi_n(A)]$ denote the intensity measure of $\Xi_n$, where $A \subset \mathbb{C}$ is assumed to be measurable. While $\Xi_{n}$, $r_n$, and $L_n$ depend on $\kappa$, we  omit this dependence in the notation.

We study the Kantorovich--Rubinstein (KR) distance between the distributions of $\Xi_n$ and a certain finite Poisson process. We recall the definition of the KR distance from \cite{DST16}. For finite point processes $\zeta$ and $\nu$ on $\mathbb{C}$, we have 
\begin{align}
	{d_{\mathrm{KR}}}(\zeta, \nu) := \sup_{h \in \text{Lip}}\big|\mathbb{E} [h(\zeta)]-\mathbb{E} [h(\nu)]\big|,\label{dkr}
\end{align}
where $\text{Lip}$ is the class of all measurable 1-Lipschitz functions $h\colon \widehat{\mathbf{N}} \to \mathbb{R}$ with respect to the total variation between measures $\omega_1, \omega_2$ on $\mathbb{C}$, which is given by
\begin{align*}
	d_{\text{TV}}(\omega_1,\omega_2):=\sup |\omega_1(A)-\omega_2(A)|,
\end{align*}
where the supremum is taken over all $A \in \mathcal B$ with $\omega_1(A), \omega_2(A)<\infty$. 

We now state our two results, which are proved in Section~\ref{sec:pr1}. The first shows that $\Xi_{n}$ may be approximated by a Poisson process at a resolution small enough to capture the behavior of the largest eigenvalue gap. 
\begin{theorem} \label{th}
	Fix  $s \in (0,1)$, $\epsilon >0$, and $\kappa>1$, and recall that $\xi_n$ denotes the point process of eigenvalues for the $n$-dimensional complex Ginibre matrix, and that $\Xi_{n}:=\Xi_{n}[\xi_n]$ is the process defined at \eqref{def:Xin}.   
There exists $C(s, \epsilon, \kappa)>1$ such that for all $B \subset B_1(0)$ satisfying $\sup_{z \in B} | z |  <s$, we have  for all  $n \in \mathbb{N}$ that 
	\begin{align}\label{e:KRthm}
		&{d_{\mathrm{KR}}}(\Xi_{n} \cap B,\zeta_\kappa \cap B) \le C n^{\epsilon-1/16},
	\end{align}
where $\zeta_\kappa$ is a Poisson process with intensity measure $L_n$.
Further,  we have for all $\kappa>1 $ that as $n \to \infty$,
\begin{equation}\label{e:rcnlimit}
\sup_{z \in \sqrt n B} \Big|\frac{r_{n}(z)^4}{4 \log n} -1\Big| \to 0.
\end{equation}
\end{theorem}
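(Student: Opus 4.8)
We propose a proof of the final assertion~\eqref{e:rcnlimit}; the preceding bound~\eqref{e:KRthm} is the separate Poisson-approximation statement outlined in the introduction.

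\emph{Reduction.} Put $H_n(z,r):=\P(\xi_n^{z!}(B_r(z))=0)$, so that $r_n(z)=\inf\{r>0:\,n\,H_n(z,r)\,\rho_n(z)\le\kappa\}$, and recall that $r\mapsto H_n(z,r)$ is continuous and non-increasing with $H_n(z,0)=1$ and $H_n(z,r)\to0$. From $\rho_n(z)=K_n(z,z)=\tfrac1\pi\P(\mathrm{Poi}(|z|^2)\le n-1)$ and a Chernoff bound for the Poisson law, $\rho_n(z)=\tfrac1\pi+O(e^{-cn})$ uniformly over $|z|\le s'\sqrt n$, where we fix $s<s'<1$ so that $B_{r_n(z)}(z)\subset\{|w|\le s'\sqrt n\}$ for large $n$. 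It therefore suffices to prove that for each fixed $\delta\in(0,1)$,
\begin{equation*}
\log H_n(z,r)=-\tfrac{r^4}{4}(1+o(1))\quad\text{uniformly over }|z|\le s\sqrt n\text{ and }(1-\delta)4\log n\le r^4\le(1+\delta)4\log n.
\end{equation*}
Indeed, inserting $r^4=4(1\pm\delta)\log n$ makes $n\,H_n(z,r)\,\rho_n(z)$ tend to $0$, resp.\ to $\infty$; since this function of $r$ is non-increasing, this forces $4(1-\delta)\log n\le r_n(z)^4\le 4(1+\delta)\log n$ for all large $n$, uniformly in $z$, which is~\eqref{e:rcnlimit}.

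\emph{The case $z=0$.} Here $\xi_n^{0!}$ is the determinantal process with kernel $K_n^{0!}=\sum_{k=2}^n\varphi_k\overline{\varphi_k}$, the projection onto $\mathrm{span}\{\varphi_2,\dots,\varphi_n\}$. Since $B_r(0)$ is rotationally symmetric and $\varphi_k\propto w^{k-1}e^{-|w|^2/2}$, the operator $P_{B_r(0)}K_n^{0!}P_{B_r(0)}$ (with $P_A$ multiplication by $\one_A$) is diagonalised by $\{\varphi_k\}_{k=2}^n$, so
\begin{equation*}
H_n(0,r)=\det(I-P_{B_r(0)}K_n^{0!}P_{B_r(0)})=\prod_{k=2}^n\int_{|w|>r}|\varphi_k(w)|^2\,dw=\prod_{m=1}^{n-1}\Big(e^{-r^2}\sum_{j=0}^{m}\tfrac{r^{2j}}{j!}\Big).
\end{equation*}
A Laplace-type evaluation of $\log H_n(0,r)=\sum_{m=1}^{n-1}(-r^2+\log\sum_{j\le m}r^{2j}/j!)$ then gives $\log H_n(0,r)=-\tfrac{r^4}{4}+O(r^2)$: the summands with $m\gtrsim r^2$ are negligible, while for $m\lesssim r^2$ the inner sum is comparable to its last term $r^{2m}/m!$, so each summand is $\approx r^2(-1+x-x\log x)$ with $x=m/r^2$, and $\int_0^1(-1+x-x\log x)\,dx=-\tfrac14$. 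Hence $\log H_n(0,r)=-\tfrac{r^4}{4}(1+O(r^{-1}))$, and $H_n(0,r)\ge n^{-O(1)}$, on the above range of $r$.

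\emph{Transport to general $z$ by magnetic translation.} Let $M_z$ be the unitary operator on $L^2(\mathbb C)$ given by $(M_zF)(w)=e^{i\,\mathrm{Im}(w\bar z)}F(w-z)$. It preserves the Bargmann--Fock space $W_\infty=\overline{\mathrm{span}}\{\varphi_k:k\ge1\}$, satisfies $M_z^{-1}P_{B_r(z)}M_z=P_{B_r(0)}$, and $|M_z\varphi_k(w)|=|w-z|^{k-1}e^{-|w-z|^2/2}/\sqrt{\pi(k-1)!}$. Writing $K_V$ for the orthogonal projection onto $V$, $V_z=W_n\ominus\mathbb C\psi_z$ and $\psi_z=K_n(\cdot,z)/\sqrt{K_n(z,z)}$, we have $H_n(z,r)=\det(I-P_{B_r(z)}K_{V_z}P_{B_r(z)})$, and since $M_z$ is unitary with $M_z^{-1}K_{M_zV_0}M_z=K_{V_0}$ also $H_n(0,r)=\det(I-P_{B_r(z)}K_{M_zV_0}P_{B_r(z)})$. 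Now $M_z$ does \emph{not} preserve $W_n$ (it spreads mass past the spectral edge $|w|\sim\sqrt n$), so $V_z$ and $M_zV_0$ are far apart as subspaces; but their kernels agree on $B_r(z)$ to super-exponential precision. Writing $W_{\ge n}=W_\infty\ominus W_n$ and $K_\infty$ for the limiting Ginibre kernel (so $M_z\varphi_1=K_\infty(\cdot,z)/\sqrt{K_\infty(z,z)}$),
\begin{equation*}
K_{V_z}-K_{M_zV_0}=\big(M_zK_{W_{\ge n}}M_z^{-1}-K_{W_{\ge n}}\big)-\big(\psi_z\otimes\overline{\psi_z}-(M_z\varphi_1)\otimes\overline{(M_z\varphi_1)}\big),
\end{equation*}
and for $x\in B_r(z)$ one has $(M_zK_{W_{\ge n}}M_z^{-1})(x,x)=\tfrac1\pi\P(\mathrm{Poi}(|x-z|^2)\ge n)\le\tfrac1\pi\P(\mathrm{Poi}(r^2)\ge n)\le e^{-n}$ (as $r^2=O(\sqrt{\log n})$), $K_{W_{\ge n}}(x,x)=\tfrac1\pi\P(\mathrm{Poi}(|x|^2)\ge n)\le e^{-cn}$ (as $|x|\le s'\sqrt n<\sqrt n$), and $\|\one_{B_r(z)}(\psi_z-M_z\varphi_1)\|\le e^{-cn}$ by the same Poisson tail applied to $K_n(\cdot,z)-K_\infty(\cdot,z)$ near $z$. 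As each bracket is a difference of positive operators with these diagonals, $\|P_{B_r(z)}(K_{V_z}-K_{M_zV_0})P_{B_r(z)}\|_1\le e^{-cn}$, while $\|P_{B_r(z)}K_{V_z}P_{B_r(z)}\|_1$ and $\|P_{B_r(z)}K_{M_zV_0}P_{B_r(z)}\|_1$ are at most $\int_{B_r(z)}K_\infty(x,x)\,dx=r^2$. The Lipschitz bound for Fredholm determinants then gives $|H_n(z,r)-H_n(0,r)|\le e^{-cn}e^{O(r^2)}=e^{-c'n}$, uniformly over $|z|\le s\sqrt n$. Since $H_n(0,r)\ge n^{-O(1)}\gg e^{-c'n}$ on the relevant range, $H_n(z,r)=H_n(0,r)(1+o(1))$, which together with the previous step closes the reduction and proves~\eqref{e:rcnlimit}.

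\emph{Main obstacle.} The decisive step is the last one. Because $M_z$ displaces $W_n$ by an amount of order one, no bound on $\|K_{V_z}-K_{M_zV_0}\|$ is available, and one must exploit that $H_n(z,r)$ depends on the Palm kernel only through its restriction to $B_r(z)\times B_r(z)$ --- a small disk deep in the bulk, on which the discrepancy between $K_n^{z!}$ and its magnetic translate is far below the resolution scale $e^{-r^4/4}=n^{-1+o(1)}$. Making the trace-norm estimate on this restriction uniform in $z$, and in particular handling the coherent-state direction $\psi_z$, is the technical crux; it is precisely here that the bulk hypothesis $\sup_{z\in B}|z|<s$ enters, to render the Poisson large-deviation probabilities exponentially small.
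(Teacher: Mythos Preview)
Your proposal explicitly addresses only~\eqref{e:rcnlimit}, leaving the Poisson-approximation bound~\eqref{e:KRthm} aside. Since~\eqref{e:KRthm} is the principal content of the theorem and requires most of the paper's machinery (the coupling construction, the adapted Stein bound of Lemma~\ref{l:august}, and the large-deviation estimate Lemma~\ref{lem:mom2}), this is a substantial omission, though you acknowledge it.

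For~\eqref{e:rcnlimit} itself, your argument is essentially correct and takes a genuinely different route from the paper's. The paper (Lemma~\ref{lem:int}) sandwiches $\P(\xi_n^{z!}(B_r(z))=0)$ between $\P(\xi(B_r(0))=0)$ below (via stochastic domination $\xi_n^{z!}\preceq\xi_n$ and the comparison Lemma~\ref{lem:gininf}) and $\P(\xi_n(B_r(z))\le 1)$ above (via the coupling of Section~\ref{sec:coup}), then invokes the known infinite-Ginibre hole asymptotic for the lower bound and the annular large-deviation Lemma~\ref{lem:mom2} for the upper bound. You instead compute $H_n(0,r)$ explicitly by diagonalizing the Palm kernel on a rotationally invariant disk, and transport to general $z$ by comparing Fredholm determinants through the magnetic translation $M_z$, using that the kernels $K_n^{z!}$ and $M_zK_n^{0!}M_z^{-1}$ agree on $B_r(z)\times B_r(z)$ up to $e^{-cn}$. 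This analytic route is more self-contained---no stochastic domination, negative association, or Kostlan representation---and even yields a sharper error in the exponent. The paper's route, however, produces Lemma~\ref{lem:mom2} (the bound on $\P(\xi_n(B_r(z))\le 2)$) as a by-product, and that lemma is indispensable later when bounding the second term in~\eqref{eq:KRbou}; your Fredholm-determinant method does not obviously yield an estimate on ``at most two points'' events, so the paper's large-deviation lemma would still be needed to complete the full theorem.

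Two minor remarks: the claim $\log H_n(0,r)=-r^4/4+O(r^2)$ followed by $(1+O(r^{-1}))$ is internally inconsistent (the former gives relative error $O(r^{-2})$), but only $(1+o(1))$ is actually used. And the containment $B_{r_n(z)}(z)\subset\{|w|\le s'\sqrt n\}$ is not needed in your reduction step; what the transport step requires is $B_r(z)\subset\{|w|\le s'\sqrt n\}$ for $r$ in the fixed range $r^4\asymp\log n$, which is immediate from $|z|\le s\sqrt n$.
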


Our second result is a consequence of the previous theorem. It obtains the asymptotic scaling of the largest distance from a particle  in $ \xi_n$ to its nearest neighbor. This scaling is different than the one for independent points, which is order $\log n$ \cite{cho}. 

\begin{corollary}
	\label{cor:maxdistance}
	For all $B \subset B_1(0)$ such that $\sup_{ z \in B} | z | <1$, we have that as $n \to \infty$,
	$$
	\frac{1}{4 \log n} \max_{z \in \xi_n \cap \sqrt nB} \min_{w \in \xi_n \setminus \{z\}} | z-w|^4 \stackrel{\P}{\longrightarrow} 1.
	$$
\end{corollary}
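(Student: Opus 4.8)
Throughout, write $M_n:=\max_{z\in\xi_n\cap\sqrt n B}\min_{w\in\xi_n\setminus\{z\}}|z-w|$, so the claim is that $M_n^4/(4\log n)\to 1$ in probability; we may assume $|B|>0$, since otherwise $\xi_n\cap\sqrt n B=\emptyset$ almost surely and there is nothing to prove. It is enough to show that for every $\delta>0$ one has both $\P(M_n^4>(4+\delta)\log n)\to 0$ and $\P(M_n^4<(4-\delta)\log n)\to 0$. My plan is to use Theorem~\ref{th} for the second (harder) bound and a first moment estimate for the first. I will record the dependence on $\kappa>1$ by writing $\Xi_n^{(\kappa)},r_n^{(\kappa)},L_n^{(\kappa)}$, and use two elementary facts from \eqref{def:Xin}: (i) if $\Xi_n^{(\kappa)}(B)\ge 1$ then some $z\in\xi_n\cap\sqrt n B$ has $(\xi_n\setminus\{z\})(B_{r_n^{(\kappa)}(z)}(z))=0$, hence $M_n>r_n^{(\kappa)}(z)\ge\inf_{z'\in\sqrt n B}r_n^{(\kappa)}(z')$; (ii) if $M_n>r$ then some $z\in\xi_n\cap\sqrt n B$ has $(\xi_n\setminus\{z\})(B_r(z))=0$, so $\{M_n>r\}\subseteq\{N_n^{(r)}\ge 1\}$ with $N_n^{(r)}:=\#\{z\in\xi_n\cap\sqrt n B:(\xi_n\setminus\{z\})(B_r(z))=0\}$.

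I would next observe that $L_n^{(\kappa)}(B)=\kappa|B|$ exactly, for every $n$. By the Mecke equation, $L_n^{(\kappa)}(B)=\int_{\sqrt n B}\P(\xi_n^{z!}(B_{r_n^{(\kappa)}(z)}(z))=0)\,\rho_n(z)\,\d z$, using $\sqrt n B\subseteq B_{\sqrt n}(0)$ (valid since $\sup_{z\in B}|z|<s<1$). For fixed $z$ the function $r\mapsto n\,\P(\xi_n^{z!}(B_r(z))=0)\,\rho_n(z)$ is continuous, since $\xi_n^{z!}$ is a finite simple point process with absolutely continuous correlation functions (being the reduced Palm process of a determinantal process) so its hole probability has no jumps; it equals $n\rho_n(z)>\kappa$ for small $r$ and tends to $0$ as $r\to\infty$, so at the infimum $r_n^{(\kappa)}(z)$ it equals exactly $\kappa$. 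Hence the integrand equals $\kappa/n$ and, using $|\sqrt n B|=n|B|$, we obtain $L_n^{(\kappa)}(B)=\kappa|B|$. Since $\omega\mapsto\one\{\omega(B)=0\}$ is $1$-Lipschitz in $d_{\mathrm{TV}}$, taking $\epsilon\in(0,1/16)$ in Theorem~\ref{th} yields, for each fixed $\kappa>1$, that $\P(\Xi_n^{(\kappa)}(B)=0)\to\P(\zeta_\kappa(B)=0)=e^{-\kappa|B|}$ as $n\to\infty$.

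For the lower bound on $M_n$: by \eqref{e:rcnlimit} we have $\inf_{z\in\sqrt n B}r_n^{(\kappa)}(z)^4/(4\log n)\to 1$, so for $n$ large $((4-\delta)\log n)^{1/4}\le\inf_{z\in\sqrt n B}r_n^{(\kappa)}(z)$; together with the contrapositive of (i), $\P(M_n^4\le(4-\delta)\log n)\le\P(\Xi_n^{(\kappa)}(B)=0)$, whence $\limsup_n\P(M_n^4<(4-\delta)\log n)\le e^{-\kappa|B|}$, and letting $\kappa\to\infty$ through fixed values gives $\P(M_n^4<(4-\delta)\log n)\to0$. For the upper bound on $M_n$ I would apply the first moment method with $r=r_n:=((4+\delta)\log n)^{1/4}$. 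By the Mecke equation and the elementary bound $\rho_n(z)=K_n(z,z)\le1/\pi$, one has $\E[N_n^{(r_n)}]=\int_{\sqrt n B}\P(\xi_n^{z!}(B_{r_n}(z))=0)\,\rho_n(z)\,\d z\le \pi^{-1}n|B|\,\sup_{z\in\sqrt n B}\P(\xi_n^{z!}(B_{r_n}(z))=0)$. I would then invoke the uniform large deviation upper bound for the Palm hole probability established on the way to \eqref{e:rcnlimit}, namely $\sup_{z\in\sqrt n B}\P(\xi_n^{z!}(B_r(z))=0)\le\exp(-(1-o(1))r^4/4)$ as $r\to\infty$; at $r=r_n$ this is at most $n^{-1-\delta/8}$ for large $n$, so $\E[N_n^{(r_n)}]\le\pi^{-1}|B|\,n^{-\delta/8}\to0$. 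By Markov's inequality and (ii), $\P(M_n^4>(4+\delta)\log n)=\P(M_n>r_n)\le\E[N_n^{(r_n)}]\to0$. Combining the two bounds proves the corollary.

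The main obstacle, and the reason the corollary is not immediate from the Poisson approximation, is the upper bound on $M_n$. Because $\P(\Xi_n^{(\kappa)}(B)\ge1)$ tends to $1-e^{-\kappa|B|}\in(0,1)$ rather than to $0$, Theorem~\ref{th} by itself cannot preclude $M_n$ exceeding the scale $(4\log n)^{1/4}$ by a constant factor: the largest gap genuinely fluctuates above every $r_n^{(\kappa)}$. Ruling this out requires quantitative decay of the Palm hole probability away from the single critical radius $r\asymp(4\log n)^{1/4}$ singled out by \eqref{def:rcn}, which is precisely the large deviation estimate that the proof of \eqref{e:rcnlimit} supplies. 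A secondary technical point that must be checked is the continuity in $r$ of $\P(\xi_n^{z!}(B_r(z))=0)$, which underlies the exact identity $L_n^{(\kappa)}(B)=\kappa|B|$ and hence the limit $\P(\Xi_n^{(\kappa)}(B)=0)\to e^{-\kappa|B|}$.
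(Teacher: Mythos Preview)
Your proof is correct and follows the same two-step approach as the paper: the upper tail via a first moment (Mecke) bound combined with the coupling $\P(\xi_n^{z!}(B_r(z))=0)\le\P(\xi_n(B_r(z))\le 1)$ and Lemma~\ref{lem:mom2}, and the lower tail via the Poisson approximation of Theorem~\ref{th} together with $\kappa\to\infty$. The only quibble is that the identity $L_n^{(\kappa)}(B)=\kappa|B|$ requires $n\rho_n(z)>\kappa$ and hence holds only for sufficiently large $n$ (cf.\ Lemmas~\ref{l:requality} and~\ref{l:int}), which is immaterial for the limit.
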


\section{Preliminaries} \label{sec:prelim}
\subsection{Palm calculus}

We begin by introducing three point processes that will play fundamental roles in the proof of Theorem \ref{th}.
\begin{definition} \label{d:palm}
	Let $z \in \mathbb C$. The following equalities are assumed to hold for all measurable $f\colon \mathbb{C} \times \mathbf{N} \to [0,\infty)$.
	\begin{itemize}
		\item[(a)]  $\xi_n^z$ is a Palm process of $\xi_n$ at $z$ if it satisfies
		\begin{align}
			\E \left[\int_{ B_{\sqrt n}(0)} f(z,\xi_n)\xi_n(\d z )\right]&=\int_{ B_{\sqrt n}(0)} \E \big[f(z,\xi_n^{z}) \big]\rho_n(z) \, \d z. \label{defPalm}
		\end{align}
	\item[(b)]  $\Xi_n^z$ is a Palm process of $\Xi_n$ at $z$ if it satisfies
	\begin{align}
		\E \left[\int_{ B_{1}(0)} f(z,\Xi_n)\Xi_n(\d z )\right]&=\int_{ B_{1}(0)} \E\big [f(z,\Xi_n^{z}) \big]L_n(\d z). \label{defPalm0}
	\end{align}
	\item[(c)]  $\xi_n^{z,\varnothing}$ is a Palm process of $\xi_n$ with respect to $\Xi_n$ at $z$ if it satisfies\footnote{The superscripted $\varnothing$ serves as a reminder of the indicator function for  the event $(\xi_n \setminus \{z\}) (B_{r_n}(z)) = \varnothing$ present in the definition of $\Xi_n$.}
\begin{align}
\E \left[\int_{B_{1}(0)} f(z,\xi_n)\Xi_n(\d z )\right]&=\int_{ B_{1}(0)} \E \big[f(z,\xi_n^{z,\varnothing}) \big]L_n(\d z). \label{defPalm1}
\end{align}
\end{itemize}
The processes $\xi_n$, $\xi_n^{z!}:=\xi_n^z\setminus \{z\}$, $\Xi_n^{z!}:=\Xi_n^z \setminus \{z\}$ and $\xi_n^{z!,\varnothing}:=\xi_n^{z,\varnothing}\setminus \{z\}$ are called {\em reduced Palm processes}.
\end{definition}

It follows from standard results that for all $z  \in \mathbb C$ and  $n \in \mathbb{N}$, there exist  point processes $\xi_n^{z}$, $\Xi_n^{z}$, and $\xi_n^{z, \varnothing}$ satisfying parts (a), (b), and (c) of Definition~\ref{d:palm}, respectively \cite[Chapter 6]{K17}. Further, \cite[Lemma 6.2(ii)]{K17} implies $z \in \xi_n^{z}$ almost surely, and parallel claims hold for $\Xi_n^{z}$ and $\xi_n^{z, \varnothing}$.
\begin{remark}\label{r:XiPalm}
Comparing 
\eqref{defPalm0} and \eqref{defPalm1}, we find that that $\Xi_n[\xi_n^{z,\varnothing}]$ is a Palm process of $\Xi_n$ at $z$. Indeed, if we choose $\tilde f(x,\omega):=f(x,\Xi_n[\omega])$ in \eqref{defPalm1}, then its left-hand side coincides with the left-hand side of \eqref{defPalm0}, and comparing their right-hand sides gives the claim.
\end{remark}

 We write $P_n$, $P_n^{z}$ and $P_n^{z, \varnothing}$ for the distributions of $\xi_n$, $\xi_n^{z}$ and $\xi_n^{z,\varnothing}$, respectively, and $P_n^{z!}$ and $P_n^{z!, \varnothing}$ for the distributions of the associated reduced Palm processes. For brevity, we write $B_{r_n}(z)$ instead of $B_{r_n(z)}(z)$.
 
 For every $x\in \mathbb{C}$,  $\xi_n^{x!}$ is a  determinantal process with correlation kernel $K_n^x$ given by
\begin{equation}
	K_n^x(z,w)=K_n(z,w)-\frac{K_n(z,x)K_n(x,w)}{K_n(x,x)},\quad z,w \in \mathbb C \label{KPalm}
\end{equation} 
(see \cite[Theorem 1.7]{ST03}).

\begin{lemma}\label{l:requality}
For every $s \in (0,1)$ and $\kappa >1$, there exists $C(s, \kappa)>1$ such that for all $z \in B_{s \sqrt{n}}(0)$ and $n \ge C$, 
\begin{equation}
n \P(\xi_n^{z!}(B_{r_n}(z))=0) \rho_n(z) = \kappa.
\end{equation}
\end{lemma}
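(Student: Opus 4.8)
\emph{Proof plan.} Fix $s\in(0,1)$, $\kappa>1$, $z\in\mathbb C$ with $|z|\le s\sqrt n$, and consider the function
\begin{equation*}
h_z(r):=n\,\P\big(\xi_n^{z!}(B_r(z))=0\big)\,\rho_n(z),\qquad r>0.
\end{equation*}
By \eqref{def:rcn} we have $r_n(z)=\inf\{r>0:h_z(r)\le\kappa\}$, so it suffices to show that $h_z$ is continuous and non-increasing on $(0,\infty)$, that $h_z(r)\to n\rho_n(z)>\kappa$ as $r\downarrow 0$ (for $n$ large, uniformly in the relevant range of $z$), and that $h_z(r)\to 0$ as $r\to\infty$. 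The identity $h_z(r_n(z))=\kappa$ then follows from the intermediate value theorem: the set $\{r>0:h_z(r)\le\kappa\}$ is a half-line $[r_*,\infty)$ with $r_*>0$, right-continuity at $r_*$ forces $h_z(r_*)\le\kappa$, while $h_z>\kappa$ on $(0,r_*)$ together with left-continuity at $r_*$ forces $h_z(r_*)\ge\kappa$.

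The monotonicity is immediate: $B_r(z)\subseteq B_{r'}(z)$ for $r\le r'$, so $\{\xi_n^{z!}(B_{r'}(z))=0\}\subseteq\{\xi_n^{z!}(B_r(z))=0\}$ and $h_z$ is non-increasing. For continuity one argues by dominated convergence, using that $\xi_n^{z!}$ is almost surely finite (indeed, by \eqref{KPalm} it is determinantal with the finite-rank Hermitian projection kernel $K_n^z$, namely the projection onto $V_n\ominus\mathbb{C}K_n(\cdot,z)$, where $V_n=\mathrm{span}\{\varphi_1,\dots,\varphi_n\}$). Right-continuity at $r_0$ follows from $\bigcap_{r>r_0}B_r(z)=B_{r_0}(z)$. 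For left-continuity one uses $\bigcup_{r<r_0}B_r(z)=\{w:|w-z|<r_0\}$, so that $\lim_{r\uparrow r_0}h_z(r)=n\rho_n(z)\,\P\big(\xi_n^{z!}(\{w:|w-z|<r_0\})=0\big)$; since the intensity measure of $\xi_n^{z!}$ has density $K_n^z(w,w)$ with respect to Lebesgue measure, the circle $\{|w-z|=r_0\}$ is a.s.\ avoided by $\xi_n^{z!}$, and this limit equals $h_z(r_0)$.

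For the endpoint at $0$: as $r\downarrow 0$ we have $B_r(z)\downarrow\{z\}$, and $\xi_n^{z!}=\xi_n^z\setminus\{z\}$ has no atom at $z$ because $\xi_n^z$ is simple, so $\P(\xi_n^{z!}(B_r(z))=0)\to 1$ and $h_z(r)\to n\rho_n(z)$. Writing $\rho_n(z)=K_n(z,z)=\pi^{-1}e^{-|z|^2}\sum_{j=0}^{n-1}|z|^{2j}/j!$ and applying a standard concentration bound for the Poisson distribution (legitimate since $|z|^2\le s^2n$ with $s<1$, so that $n-1$ exceeds $|z|^2$ for $n\ge C(s)$), one obtains $\rho_n(z)\ge(2\pi)^{-1}$ uniformly over $z\in B_{s\sqrt n}(0)$ once $n\ge C(s)$; hence $h_z(r)\to n\rho_n(z)\ge n/(2\pi)>\kappa$ for $n\ge C(s,\kappa)$. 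For the endpoint at $\infty$: since $K_n^z$ has rank $n-1$, the process $\xi_n^{z!}$ has exactly $n-1\ge 1$ points almost surely (recall $n\ge C>1$), so $\P(\xi_n^{z!}(\mathbb C)=0)=0$, and monotone convergence $B_r(z)\uparrow\mathbb C$ gives $h_z(r)\to 0$.

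Combining these facts yields $h_z(r_n(z))=\kappa$, with all thresholds on $n$ depending only on $s$ and $\kappa$, which is the assertion. The only steps that require genuine (though routine) care are the left-continuity of $h_z$, which rests on the absolute continuity of the Palm intensity $K_n^z(w,w)\,\d w$, and the uniform lower bound $\rho_n(z)\ge(2\pi)^{-1}$ on $B_{s\sqrt n}(0)$; the remaining ingredients are elementary.
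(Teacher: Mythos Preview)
Your proof is correct and follows essentially the same strategy as the paper: establish continuity of $r\mapsto \P(\xi_n^{z!}(B_r(z))=0)$, check the endpoint values at $r\downarrow 0$ and $r\to\infty$, and combine with the uniform lower bound $\rho_n(z)\ge(2\pi)^{-1}$ on $B_{s\sqrt n}(0)$ to conclude via the intermediate value theorem. The only cosmetic difference is that the paper obtains continuity from the explicit integral representation of the vacuum probability for the determinantal process $\xi_n^{z!}$, whereas you argue directly by monotone convergence together with the observation that circles are null sets for the Palm intensity $K_n^z(w,w)\,\d w$; your inclusion of monotonicity is convenient but not strictly needed, since continuity alone suffices to deduce $h_z(r_n(z))=\kappa$ from the definition of $r_n(z)$ as an infimum.
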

\begin{proof}
It is straightforward to see that $  \rho_n( z_n) \uparrow \frac 1 \pi$ uniformly as $n \to \infty$ for all sequences $(z_n)_{n=1}^\infty$ such that $z_n \in \sqrt{n}B$. Then there exists $C(s) > 1$ such that for all $z \in B_{s \sqrt{n}}(0)$ and $n \ge C$, 
\begin{equation}\label{e:completeproof}
	\frac 1{2\pi  } \le \rho_n(z) \le \frac 1{\pi }. 
\end{equation}
Using that $\xi^{z!}_n$ is determinantal (see \eqref{KPalm}), we have for every $r>0$ that 
\begin{equation}\label{e:integralrep}
\mathbb P(\xi^{z!}_n(B_r(z) )=0)
=
\frac{1}{n!}
\int_{B_r(z)^c}\dots \int_{B_r(z)^c} \det (K^z_n (z_i,z_j))_{i,j=1}^m  \, dz_1\dots dz_m.
\end{equation}
Together with \eqref{KPalm}, this representation shows that the function $g(r) = \P(\xi^{z!}_n(B_r(z) )=0)$ is a continuous function of $r$ for $r \in [0, \infty)$. Further, it is clear from \eqref{e:integralrep} that $g(0) = 1$ and $ \lim_{r \rightarrow \infty} g(r) = 0$. The claim follows from combining these limits with \eqref{e:completeproof}.
\end{proof}

\begin{lemma}\label{l:int}
For every $s \in (0,1)$ and $\kappa >1$, there exists $C(s, \kappa)>1$ such that for all $A \subset B_{s}(0)$ and $n \ge C$,  it holds that $L_n(A)=c|A|$.
\end{lemma}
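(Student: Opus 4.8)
The plan is to compute $L_n(A) = \E[\Xi_n(A)]$ using the definition of $\Xi_n$ at \eqref{def:Xin} together with the defining property \eqref{defPalm} of the Palm process $\xi_n^z$. Writing $A' = \sqrt{n}\,A \subset B_{s\sqrt n}(0)$, we have
\begin{align*}
L_n(A) = \E\!\left[\sum_{z \in \xi_n \cap B_{\sqrt n}(0)} \one\{(\xi_n \setminus \{z\})(B_{r_n}(z))=0\}\,\one\{z/\sqrt n \in A\}\right]
= \E\!\left[\int_{B_{\sqrt n}(0)} f(z,\xi_n)\,\xi_n(\d z)\right],
\end{align*}
where $f(z,\omega) := \one\{(\omega \setminus \{z\})(B_{r_n(z)}(z))=0\}\,\one\{z/\sqrt n \in A\}$ is nonnegative and measurable. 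Applying \eqref{defPalm} and then using $z \in \xi_n^z$ almost surely (so that $\xi_n^z \setminus \{z\} = \xi_n^{z!}$) converts this into
\begin{align*}
L_n(A) = \int_{A'} \P\big(\xi_n^{z!}(B_{r_n(z)}(z)) = 0\big)\,\rho_n(z)\,\d z.
\end{align*}

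The key point is now that by Lemma~\ref{l:requality}, for $n \ge C(s,\kappa)$ the integrand is \emph{exactly constant} on $A'$: we have $n\,\P(\xi_n^{z!}(B_{r_n}(z))=0)\,\rho_n(z) = \kappa$ for every $z \in B_{s\sqrt n}(0)$, hence $\P(\xi_n^{z!}(B_{r_n}(z))=0)\,\rho_n(z) = \kappa/n$ there. Therefore
\begin{align*}
L_n(A) = \int_{A'} \frac{\kappa}{n}\,\d z = \frac{\kappa}{n}\,|A'| = \frac{\kappa}{n}\cdot n|A| = \kappa\,|A|,
\end{align*}
using that Lebesgue measure scales as $|\sqrt n A| = n|A|$ on $\mathbb C \cong \R^2$. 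This identifies the constant as $c = \kappa$ (consistent with the notation $\zeta_\kappa$ in Theorem~\ref{th}).

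There is no real obstacle here: the lemma is essentially a bookkeeping consequence of the Palm formula and the carefully engineered definition \eqref{def:rcn} of $r_n$, whose entire purpose is to force the integrand above to be constant. The only points requiring a line of care are (i) checking that $f$ is genuinely measurable on $\mathbb C \times \mathbf N$ so that \eqref{defPalm} applies — routine, since $\omega \mapsto (\omega\setminus\{z\})(B)$ is measurable — and (ii) invoking $z \in \xi_n^z$ a.s.\ (cited after Definition~\ref{d:palm} from \cite[Lemma 6.2(ii)]{K17}) to rewrite the event in terms of the reduced Palm process $\xi_n^{z!}$, matching the form appearing in \eqref{def:rcn} and Lemma~\ref{l:requality}. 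Once $n \ge C(s,\kappa)$ so that Lemma~\ref{l:requality} is in force, the computation is immediate.
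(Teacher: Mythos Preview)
Your proof is correct and follows essentially the same route as the paper: apply the Palm formula \eqref{defPalm} to the indicator defining $\Xi_n$, then invoke Lemma~\ref{l:requality} to see the resulting integrand is the constant $\kappa/n$ on $\sqrt n A$. Your version is slightly more explicit about the scaling $|\sqrt n A|=n|A|$ and the identification $c=\kappa$, but the argument is the same.
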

\begin{proof}
	By the definition of $\Xi_n$ and \eqref{defPalm} with $f(z, \omega) = \one\{\omega(B_{r_n}(z ))=1\}$ we obtain
	\begin{align*}
	L_n(A) &= 	\E \Big[\sum_{z \in \xi_n} \one\{z/\sqrt n \in A\} \one \{\xi_n(B_{r_n}(z ))=1\}\Big]\\
	&= \E \Big[\int_{\sqrt n A}  \one \{\xi_n(B_{r_n}(z ))=1\} \, \xi_n(\d z)\Big]= \int_{\sqrt nA} \P(\xi_n^z(B_{r_n}(z))=1) \rho_n(z) \, \d z.
	\end{align*}
	Now the assertion follows from \eqref{def:rcn} and Lemma~\ref{l:requality}. 
\end{proof}

\subsection{Stochastic domination}
By \cite[Theorem 3]{G10} (see also \cite{MR21}), the process $\xi_n^{x!}$ is stochastically dominated by $\xi_n$ (denoted by $P_n^{x!}\le P_n$), which means that
\begin{align}
	\mathbb{E} [F(\xi_n^{x!})] \le \mathbb{E} [F(\xi_n)] \label{Palmdom}
\end{align}
for every measurable $F\colon \mathbf{N} \to \mathbb{R}$ that is bounded and increasing; the latter condition means that $F(\omega_1) \le F(\omega_2)$ if $\omega_1 \subset \omega_2$.

An important property of determinantal point processes is that they have negative associations (see \cite[Theorem 3.7]{L14}). We say that a point process $\nu$ {has negative associations} if 
\begin{align}
{	\mathbb E[F(\nu)G(\nu)] \le  \mathbb E[F(\nu)] \E[G(\nu)],} \label{defNA}
\end{align}
for any  real, bounded, and increasing functions $F,G\colon \mathbf N \to \mathbb R$ that are measurable with respect to complementary subsets (see \cite{LS19}).

We also require the following lemma concerning stochastic domination.

\begin{lemma}\label{l:stocdom}
For every $s \in (0,1)$ and $\kappa >1$, there exists $C(s, \kappa)>1$ such that for all  $n \ge C$,
we have for $L_n$-almost all $z \in B_s(0)$ that
\begin{align}\label{xiPalmdomincr}
P_n^{\sqrt{n} z, \varnothing}|_{B_{r_{n}}(\sqrt nz)^c}\ge P_n^{\sqrt n z}|_{B_{r_{n}}(\sqrt nz)^c}.
\end{align}
\end{lemma}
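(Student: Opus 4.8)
The plan is to realize the stochastic domination \eqref{xiPalmdomincr} by comparing the two reduced Palm processes through the conditioning that defines them, rather than constructing an explicit coupling of the full processes. The key observation is this: by Remark~\ref{r:XiPalm}, the Palm process $\xi_n^{z,\varnothing}$ of $\xi_n$ with respect to $\Xi_n$ carries exactly the information of $\xi_n$ conditioned both on having a point at $z$ \emph{and} on that point being retained by the thinning in \eqref{def:Xin}, i.e.\ on the event $(\xi_n\setminus\{z\})(B_{r_n}(z))=0$. Writing things at the level of Palm distributions, for $L_n$-almost every $z$ we should have the identity of measures
\begin{align}\label{e:condident}
P_n^{\sqrt nz,\varnothing}(\,\cdot\,)=\frac{\mathbb{E}\big[\one\{\xi_n^{\sqrt nz}\in\cdot\}\,\one\{\xi_n^{\sqrt nz!}(B_{r_n}(\sqrt nz))=0\}\big]}{\P(\xi_n^{\sqrt nz!}(B_{r_n}(\sqrt nz))=0)},
\end{align}
which expresses $\xi_n^{\sqrt nz,\varnothing}$ as $\xi_n^{\sqrt nz}$ conditioned on the ``hole event'' $H:=\{\xi_n^{\sqrt nz!}(B_{r_n}(\sqrt nz))=0\}$. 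The first step is to verify \eqref{e:condident} by testing against functions $f$ in \eqref{defPalm1}: decompose $\Xi_n(\d z)$ according to its definition, use the Palm formula \eqref{defPalm} for $\xi_n$, and match against \eqref{defPalm1} using Lemma~\ref{l:requality} to identify the normalizing constant (note $n\P(H)\rho_n(\sqrt nz)=\kappa$, so the denominator is positive and the conditioning is well-defined). The only subtlety is the almost-everywhere qualifier, which is harmless since both sides are defined via $L_n$-integrals.

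The second step reduces \eqref{xiPalmdomincr} to a statement about conditioning on a hole event. Once we restrict both processes to $B_{r_n}(\sqrt nz)^c$, the event $H$ depends only on the configuration \emph{inside} $B_{r_n}(\sqrt nz)$, so $\xi_n^{\sqrt nz,\varnothing}|_{B_{r_n}(\sqrt nz)^c}$ is $\xi_n^{\sqrt nz}|_{B_{r_n}(\sqrt nz)^c}$ conditioned on an event measurable with respect to the complementary region. Thus it suffices to show: for a determinantal process $\eta$ (here the reduced Palm process $\xi_n^{\sqrt nz!}$, plus the deterministic atom at $\sqrt nz$ which lies in $B_{r_n}(\sqrt nz)$ and so drops out after restriction), and for any ball $B=B_{r_n}(\sqrt nz)$, the conditional law of $\eta|_{B^c}$ given $\{\eta(B)=0\}$ stochastically dominates the unconditional law of $\eta|_{B^c}$. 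This is precisely an instance of negative association \eqref{defNA}: for any bounded increasing $F$ on configurations in $B^c$, write $G=\one\{\eta(B)=0\}$, which is increasing as a function of the configuration in $B$ (fewer points makes $\eta(B)=0$ more likely — more carefully, $\one\{\eta(B)=0\}$ is a \emph{decreasing} function of $\eta|_B$, so we apply \eqref{defNA} to $F$ and $-G$, or equivalently use that $\mathbb{E}[F(\eta)\mid \eta(B)=0]\ge\mathbb{E}[F(\eta)]$ follows from $\mathrm{Cov}(F(\eta),\one\{\eta(B)=0\})\ge 0$). The negative association property \eqref{defNA} of determinantal processes, applied with $F$ supported on $B^c$ and $G=\one\{\eta(B)=0\}$ supported on $B$ (complementary subsets), gives $\mathbb{E}[F(\eta)\one\{\eta(B)=0\}]\le\mathbb{E}[F(\eta)]\P(\eta(B)=0)$; rearranging yields $\mathbb{E}[F(\eta)\mid\eta(B)=0]\ge\mathbb{E}[F(\eta)]$, which is the desired domination after recalling the definition \eqref{Palmdom}.

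Finally, I would assemble the pieces: combine \eqref{e:condident} (and its analogue with the conditioning removed, which is just the defining property of $\xi_n^{\sqrt nz}$), restrict to $B_{r_n}(\sqrt nz)^c$, and apply the negative-association inequality of the previous paragraph. The constant $C(s,\kappa)$ enters only to guarantee, via Lemma~\ref{l:requality}, that $r_n(\sqrt nz)$ is well-defined and that $\P(H)>0$ for all $z\in B_s(0)$ and $n\ge C$, so that the conditioning in \eqref{e:condident} makes sense. The main obstacle I anticipate is the bookkeeping in Step~1: one must carefully handle the deterministic atom at $\sqrt nz$ present in both $\xi_n^{\sqrt nz}$ and $\xi_n^{\sqrt nz,\varnothing}$ (it contributes to neither the hole event nor, after restriction, to the configurations being compared), and one must confirm that the thinning indicator in \eqref{def:Xin} depends on $\xi_n\setminus\{z\}$ only through $B_{r_n}(z)$ so that it becomes an event about $\eta|_B$; both are routine but require care to state precisely. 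The negative-association step itself is essentially immediate once the right $F$ and $G$ are identified.
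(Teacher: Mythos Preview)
Your approach is essentially the same as the paper's: both reduce the claim to the negative-association inequality for the determinantal process $\xi_n^{\sqrt nz!}$, using the Palm definitions to recognize $P_n^{\sqrt nz,\varnothing}$ as $P_n^{\sqrt nz}$ conditioned on the hole event, and then invoking \eqref{defNA} on complementary regions. The paper carries out the computation inside an integral over an arbitrary test set $A$ (deducing the $L_n$-a.e.\ statement at the end), whereas you first isolate the pointwise conditional identity \eqref{e:condident} and then apply negative association; these are cosmetic differences.

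One exposition slip to fix: in your second step you correctly note that $G=\one\{\eta(B)=0\}$ is \emph{decreasing}, so \eqref{defNA} must be applied to $F$ and $-G$, yielding $\E[F(\eta)\one\{\eta(B)=0\}]\ge\E[F(\eta)]\P(\eta(B)=0)$ and hence $\E[F(\eta)\mid\eta(B)=0]\ge\E[F(\eta)]$. But the sentence that follows writes the inequality as $\le$ and then claims it ``rearranges'' to $\ge$, which is impossible. Since you state the correct direction just before, this is clearly a typo rather than a conceptual error, but it should be corrected.
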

\begin{proof}
Let $F\colon \mathbf{N} \to \mathbb{R}$ be bounded and increasing. For all Borel sets $A$ with $\sup_{z \in A} |z| <s$ we have
\begin{align}
 \int_{A} \E [F(\xi_n^{\sqrt{n} z, \varnothing} \cap B_{r_{n}}(\sqrt nz)^c)] L_n(\d z)
	&= \E \left[\int_{\sqrt n A}   F(\xi_n\cap B_{r_{n}}( z)^c) \one\{(\xi_n-\delta_z)(B_{r_{n}}(z))=0 \} \xi_n(\d z) \right]\nonumber\\
	& =  \int_{\sqrt n A}\E \big[F(\xi_n^z\cap B_{r_{n}}(z)^c) \one\{\xi_n^{z!}(B_{r_{n}}(z))=0 \}\big]  \rho_n(z)\, \d z\nonumber\\
	& =  n \int_{A}\E\big [F(\xi_n^{\sqrt n z}\cap B_{r_{n}}(\sqrt nz)^c) \one\{\xi_n^{\sqrt n z!}(B_{r_{n}}(\sqrt n z))=0 \} \big]  \rho_n(\sqrt n z) \, \d z\nonumber\\
	&\ge n \int_{A}\E \big[F(\xi_n^{\sqrt n z}\cap B_{r_{n}}(\sqrt nz)^c) \big] \P(\xi_n^{\sqrt n z!}(B_{r_{n}}(\sqrt n z))=0) \rho_n(\sqrt n z)\, \d z\nonumber \\
	&=\int_{A}\E \big[F(\xi_n^{\sqrt n z}\cap B_{r_{n}}(\sqrt nz)^c)\big] L_n(\d z),\nonumber
\end{align}
where we have used \eqref{defPalm0}, Lemma~\ref{l:requality}, and a change of variables in the first line, \eqref{defPalm} in the second line, a change of variables in third line, and \eqref{defNA} together with the fact that $\xi_n^{\sqrt nz!}$ is a determinantal process (and thus  has negative associations) for the inequality in the fourth line. 
The fifth line follows from \eqref{def:rcn} and  Lemma~\ref{l:requality}. This gives the claim.
\end{proof}

\subsection{Coupling of $P_n$ and $P_n^{z!}$} \label{sec:coup}
For a fixed $z \in \mathbb{C}$, we now specify a coupling of $P_n$ and $P_n^{z!}$. 
This will help us bound the KR distance of $\Xi_n$ and a certain Poisson process in Section \ref{sec:pr1}.
By \cite[Theorem 1]{MR21},  there exists a coupling $(\xi_n, \xi_n^{z!} )$ of $\xi_n$ and $\xi_n^{z!}$  such that $\xi_n^{z!} \subset \xi_n$ and $|\xi_n^{z!}  \setminus \xi_n  |\le 1$ almost surely.  The hypotheses of \cite[Theorem 1]{MR21} are straightforward to check using the representation \eqref{e:ginkernel}. 
We will always use this coupling when discussing the relationship between $\xi_n$ and $\xi_n^{z!}$ in what follows. Moreover, \cite[Theorem 1]{MR21} also explicitly gives the conditional distribution of $\xi_n$ conditional on $\xi_n^{ z!} $. 

Similarly, for any distinct $w,z \in \mathbb{C}$, there exists a coupling of $\xi_n^{z!}$  and $\xi_n^{z!, w!}  := (\xi_n^{z!})^{w!}$. The hypotheses of \cite[Theorem 1]{MR21} may be checked in this case using \eqref{KPalm}.

Additionally, under the conditions of Lemma~\ref{l:stocdom}, we specify a coupling of $P_n^{\sqrt n z!}$ and  $P_n^{\sqrt{n} z!,\varnothing}$ on $B_{r_n}(\sqrt n z)^c$. We use in coupling in 
Section~\ref{sec:pr1} to construct a coupling of $\Xi_n$ and $\Xi_n^{z!}$ outside of a ball around $z$. 
We do not use \cite[Theorem 1]{MR21} for this purpose, since its hypotheses are harder to check in this case, and we instead settle for a weaker statement.  
Fix $z \in B_1(0)$ and $\xi_n^{\sqrt{n} z!} \sim P_n^{\sqrt{n} z!}$, and observe \eqref{xiPalmdomincr} gives that $P_n^{\sqrt n z!} \le P_n^{\sqrt{n} z!,\varnothing}$ on $B_{r_n}(\sqrt n z)^c$ (for sufficiently large $n$).
By Strassen's theorem (see \cite{lindvall}), there exists a measure $\rho \sim P_n^{\sqrt n z!} \otimes P_n^{\sqrt{n} z!,\varnothing}$ supported on $\{\omega_1 \otimes \omega_2 \in \mathbf N \times \mathbf N: \omega_1 \subset \omega_2\}$. By \cite[Theorem 3.4(i)]{K03}, $\rho$ can be disintegrated and, hence, there exists a point process $\xi_n^{z!,\varnothing}  \sim P_n^{\sqrt{n} z!,\varnothing}$ such that, almost surely,
\begin{equation}\label{e:previnc}
\xi_n^{\sqrt n z!}|_{B_{r_{n}}(\sqrt nz)^c} \subset \xi_n^{\sqrt{n} z!,\varnothing}|_{B_{r_{n}}(\sqrt nz)^c} .
\end{equation}

Combining the above couplings gives a joint coupling of $\xi_n|_{B_{r_{n}}(\sqrt nz)^c}$, $\xi_n^{\sqrt n z!}|_{B_{r_{n}}(\sqrt nz)^c}$, and  $\xi_n^{\sqrt{n} z!,\varnothing}|_{B_{r_{n}}(\sqrt nz)^c}$ such that 
$\xi_n^{\sqrt n z!}|_{B_{r_{n}}(\sqrt nz)^c} \subset \xi_n|_{B_{r_{n}}(\sqrt nz)^c}$ and \eqref{e:previnc} hold.

\subsection{Fast decay of correlation}
We begin with a preliminary bound on $K_n(z,w)$. 
\begin{lemma}\label{l:Kbound}
For all $B \subset B_1(0)$ such that $\sup_{z \in B} | z | <1$, there exists a constant $c(B)>0$ such that 
\[
\sup_{z,w \in B}
\big| K_n(\sqrt{n} z, \sqrt{n} w ) \big|
\le \frac{1}{\pi} \exp\left( - \frac{n}{2} |z-w|^2 \right) + c^{-1} e^{-cn}.
\]
\end{lemma}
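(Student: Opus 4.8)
The plan is to estimate $K_n(\sqrt n z, \sqrt n w)$ directly from its defining series \eqref{e:ginkernel}. Writing $a = \sqrt n z$ and $b = \sqrt n w$, we have
\[
K_n(a,b) = \frac{1}{\pi} e^{-\frac12(|a|^2 + |b|^2)} \sum_{k=0}^{n-1} \frac{(a\overline b)^k}{k!}.
\]
The idea is to compare the truncated exponential sum $\sum_{k=0}^{n-1} (a\overline b)^k/k!$ with the full series $e^{a\overline b}$, which would give exactly $\tfrac1\pi e^{-\frac12(|a|^2+|b|^2) + \mathrm{Re}(a\overline b)} = \tfrac1\pi e^{-\frac12|a-b|^2}$, the leading term in the claimed bound. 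So the task reduces to controlling the tail $\sum_{k=n}^{\infty} (a\overline b)^k/k!$ after multiplication by the Gaussian prefactor.

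First I would bound $|a\overline b| = n |z||w| \le n \lambda$ where $\lambda := (\sup_{B}|z|)^2 < 1$. Then I would estimate the tail crudely: since $|a\overline b|^k/k! \le (n\lambda)^k/k!$, and using $k! \ge (k/e)^k$ together with $k \ge n$, for $k \ge n$ one has $(n\lambda)^k / k! \le (e\lambda)^k (n/k)^k \le (e\lambda)^k$ once... but $e\lambda$ need not be less than $1$, so this is too lossy. The cleaner route is to use the sharper bound: for $k \ge n$, $(n\lambda)^k/k! \le (n\lambda)^n/n! \cdot \lambda^{k-n} \cdot \prod_{j=n+1}^k (n/j) \le \frac{(n\lambda)^n}{n!}\lambda^{k-n}$, so summing the geometric series in $\lambda$ gives $\sum_{k\ge n}(n\lambda)^k/k! \le \frac{(n\lambda)^n}{n!}\cdot\frac{1}{1-\lambda}$. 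By Stirling, $\frac{(n\lambda)^n}{n!} \le \frac{(n\lambda)^n}{(n/e)^n} = (e\lambda)^n$; since $\lambda < 1$ but $e\lambda$ may exceed $1$, I would instead keep a little more of Stirling, $n! \ge \sqrt{2\pi n}(n/e)^n$, which only helps by a polynomial factor and does not fix the issue. The actual fix: note we still have the Gaussian prefactor $e^{-\frac12(|a|^2+|b|^2)}$. Bounding $\frac12(|a|^2+|b|^2) \ge |a\overline b| \ge$ (something), or more simply $\frac12(|a|^2+|b|^2) \ge n\lambda'$ where $\lambda'$ is related to... Actually the honest approach is: $e^{-\frac12(|a|^2+|b|^2)} \cdot \frac{(n\lambda)^n}{n!} \le e^{-n\mu}\cdot\frac{(n\lambda)^n}{n!}$ is not automatic. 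Let me reconsider.

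The correct comparison: for fixed $z,w\in B$ write $t := |z||w| \le \lambda < 1$ and $s := \tfrac12(|z|^2 + |w|^2) \ge t$ (AM–GM). Then the prefactor contributes $e^{-ns}$ and the tail sum of the full exponential contributes at most $e^{nt}$, so the tail-portion of $K_n$ is bounded (in absolute value) by $\tfrac1\pi e^{-ns}\sum_{k\ge n}(nt)^k/k!$. Now I use the standard tail bound for the Poisson-type sum: $\sum_{k \ge n}(nt)^k/k! = e^{nt}\,\mathbb P(\mathrm{Poisson}(nt) \ge n)$, and since $nt \le n\lambda$ with $\lambda<1$, a Chernoff bound gives $\mathbb P(\mathrm{Poisson}(nt)\ge n) \le e^{-n I(\lambda)}$ with $I(\lambda) = \lambda - 1 - \log\lambda > 0$ (the Cramér rate function, increasing the smaller $\lambda$ is, and uniform over $t\le\lambda$). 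Hence the tail-portion of $K_n$ is at most $\tfrac1\pi e^{-ns}e^{nt}e^{-nI(\lambda)} \le \tfrac1\pi e^{nt - ns}e^{-nI(\lambda)} \le \tfrac1\pi e^{-nI(\lambda)}$ using $s \ge t$. This is exactly of the promised form $c^{-1}e^{-cn}$ with $c = c(B)$ depending only on $\lambda = (\sup_B|z|)^2$.

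Putting it together: $K_n(\sqrt n z, \sqrt n w) = \tfrac1\pi e^{-\frac12(|a|^2+|b|^2)}\big(e^{a\overline b} - \sum_{k\ge n}(a\overline b)^k/k!\big)$, so
\[
|K_n(\sqrt n z,\sqrt n w)| \le \tfrac1\pi e^{-\frac12(|a|^2+|b|^2) + \mathrm{Re}(a\overline b)} + \tfrac1\pi e^{-\frac12(|a|^2+|b|^2)}\sum_{k\ge n}\frac{|a\overline b|^k}{k!},
\]
and the first term equals $\tfrac1\pi e^{-\frac n2|z-w|^2}$ while the second is $\le c^{-1}e^{-cn}$ uniformly over $z,w\in B$ by the Chernoff estimate above. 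Taking the supremum over $z,w\in B$ gives the lemma. The main obstacle is purely bookkeeping: making the Chernoff/large-deviations bound on the truncated exponential series uniform over all $z,w\in B$ (equivalently over $t\le\lambda$) and confirming the rate constant $c$ depends only on $\lambda<1$; everything else is an identity plus Stirling-type estimates, with no analytic subtlety.
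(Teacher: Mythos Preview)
Your argument is correct. Both you and the paper split $K_n(\sqrt n z,\sqrt n w)$ as the Gaussian prefactor times $e^{nz\bar w}$ plus a remainder, and both show the remainder is $O(e^{-cn})$ uniformly on $B$. The difference lies in how the remainder is controlled. The paper invokes an external asymptotic expansion for the truncated exponential series (\cite[Lemma~4.1]{goel2024central}), which gives the remainder in the closed form $\tfrac{1}{\sqrt{2\pi n}}\,\tfrac{(z\bar w\, e^{1-z\bar w})^n}{1-z\bar w}(1+O(n^{-1}))$ and then bounds $|z\bar w\, e^{1-z\bar w}|$ directly; the rate constant comes out as $\tfrac12\big[(|z|^2-1-\log|z|^2)+(|w|^2-1-\log|w|^2)\big]$. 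You instead bound the tail $\sum_{k\ge n}(nt)^k/k!$ with $t=|z||w|$ by recognizing it as $e^{nt}\,\mathbb P(\mathrm{Poisson}(nt)\ge n)$ and applying a Chernoff bound, obtaining the rate $I(\lambda)=\lambda-1-\log\lambda$ with $\lambda=(\sup_B|z|)^2$. Your route is more elementary and self-contained (no external citation needed), while the paper's route yields a sharper asymptotic for the remainder should one ever need it; for the present lemma either suffices. One small point worth tightening in a final write-up: your uniformity claim $\mathbb P(\mathrm{Poisson}(nt)\ge n)\le e^{-nI(\lambda)}$ for all $t\le\lambda$ is justified because the Cramér rate $g(t)=t-1-\log t$ is decreasing on $(0,1)$, hence $g(t)\ge g(\lambda)$; you allude to this but it deserves one explicit sentence. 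The exploratory false starts should of course be excised.
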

\begin{proof}
By the definition of $K_n$ and \cite[Lemma 4.1]{goel2024central},
\begin{align*}
K_n(\sqrt{z},\sqrt{w})
&=
\frac{1}{\pi} e^{- n(|z|^2 + |w|^2)/2}\sum_{k=0}^{n-1} \frac{ n^{k-1} (z\bar w)^{k-1}}{k!}\\
& =\frac{1}{\pi} e^{- n(|z|^2 + |w|^2)/2} \left( e^{n z\bar w}   - \frac{ e^{ n z\bar w} }{\sqrt{2 \pi n}} \frac{(z \bar w e^{1-z\bar w })^n }{1-z\bar w } \big( 1 + R_n(z,w ) \big) \right),
\end{align*}
where $R_n$ satisfies 
\[
\sup_{z,w \in B} \big|  R_n(z,w) \big| \le C n^{-1}
\]
for some constant $C>1$. 
We note that 
\[
|e^{- n(|z|^2 + |w|^2)/2}  e^{n z\bar w}| = e^{- n(|z|^2 + |w|^2)/2}  e^{n \Re( z\bar w)}  = e^{ - n |z -w|^2/2}.
\]
Further, for the second-order term,
\[
|e^{- n(|z|^2 + |w|^2)/2} (z \bar w)^n e^n | \le e^{- n(|z|^2 -  \ln |z|^2 - 1 + |w|^2 - \ln |w|^2  -1  )/2  }.
\]
The conclusion now follows from the elementary fact that there exists $c(B) > 0$ such that 
\[
x - 1  - \ln x > c 
\]
for $x$ such that $0 \le x \le \sup_{z \in B} |z|$, and the fact that $|1-z\bar w|$ is uniformly lower bounded for $z,w \in B$. 
\end{proof}

The following lemma concerns a mixing property of $\xi_n$. 
\begin{lemma}\label{l:lemma5}
Let $\rho^{(m)}$ be the $m$-th order correlation function of $\xi_n$. Fix $p, q \in \mathbb{N}$ and set $m= p+q$. 
For all $B \subset B_1(0)$ such that $\sup_{ z \in B_1(0)} | z | <1$, there exist constants $n_0(B), c(B)>0$ such that for all $x_1, \dots  , x_{p+q}$, 
\begin{align}
\big|\rho^{(p+q)}(x_1,\dots,x_{p+q})- \rho^{(p)}(x_1,\dots,x_p)\rho^{(q)}(x_{p+1},\dots,x_{p+q}) \big|\le  m^{1+m/2}  \left( e^{- ns^2} + c^{-1} e^{-cn} \right)  ,\label{eq:rhodec}
\end{align}
where  
\[ s=\inf_{i \in \{1,\dots,p\}, j \in \{p+1,\dots,p+q\}} |x_i-x_j|.
\]
\end{lemma}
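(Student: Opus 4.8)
The plan is to expand both correlation functions as determinants of the Ginibre kernel $K_n$ and exploit the exponential off-diagonal decay of $K_n$ furnished by Lemma~\ref{l:Kbound}. Write $\rho^{(p+q)}(x_1,\dots,x_{p+q}) = \det M$, where $M = (K_n(x_i,x_j))_{i,j=1}^{m}$ is the full $m \times m$ Gram-type matrix, and let $\widetilde M$ be the block-diagonal matrix obtained from $M$ by zeroing out the two off-diagonal blocks that link the index set $\{1,\dots,p\}$ to $\{p+1,\dots,p+q\}$. Then $\det \widetilde M = \rho^{(p)}(x_1,\dots,x_p)\,\rho^{(q)}(x_{p+1},\dots,x_{p+q})$ exactly, so the left-hand side of \eqref{eq:rhodec} equals $|\det M - \det \widetilde M|$. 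The entries of $M - \widetilde M$ are precisely the cross terms $K_n(x_i,x_j)$ with $i \le p < j$ (or vice versa), and by Lemma~\ref{l:Kbound} each such entry is bounded in modulus by $\tfrac{1}{\pi} e^{-n|x_i-x_j|^2/2} + c^{-1}e^{-cn} \le \tfrac{1}{\pi} e^{-ns^2/2} + c^{-1}e^{-cn}$, which is small.

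Next I would invoke a quantitative determinant-perturbation estimate: if $A$ and $B$ are $m \times m$ complex matrices all of whose entries are bounded in modulus by $1$ (which holds here since $|K_n(\sqrt n z, \sqrt n w)| \le 1/\pi \le 1$ uniformly for $z,w \in B$, after the scaling built into the statement), then
\begin{align*}
|\det A - \det B| \le m \cdot m^{(m-1)/2} \cdot \max_{i,j} |A_{ij} - B_{ij}|.
\end{align*}
This follows from writing $\det A - \det B$ as a telescoping sum of $m$ terms, each replacing one column of $A$ by the corresponding column of $B$, and bounding each term by Hadamard's inequality: a determinant of an $m \times m$ matrix with columns of Euclidean norm at most $\sqrt m$ and one column of norm at most $\sqrt m \max_{i,j}|A_{ij}-B_{ij}|$ is at most $m^{(m-1)/2}\sqrt m \max_{i,j}|A_{ij}-B_{ij}| = m^{m/2}\max_{i,j}|A_{ij}-B_{ij}|$. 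Summing over the $m$ telescoping steps gives the factor $m^{1+m/2}$, matching the stated bound. Applying this with $A = M$, $B = \widetilde M$ and using the entrywise bound from the previous paragraph yields exactly \eqref{eq:rhodec}, after absorbing the $1/\pi$ into the constants and noting $e^{-ns^2/2}$ can be replaced by $e^{-ns^2}$ at the cost of adjusting $n_0(B)$ (or one can simply keep $e^{-ns^2/2}$ and rename constants, since the lemma is used only qualitatively later).

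The main obstacle, such as it is, is purely bookkeeping: one must be careful that the bound of Lemma~\ref{l:Kbound} is stated for arguments $\sqrt n z, \sqrt n w$ with $z, w \in B$, whereas Lemma~\ref{l:lemma5} is phrased for generic points $x_1,\dots,x_{p+q}$ with separation parameter $s$; matching conventions requires interpreting the $x_i$ as lying in $\sqrt n B$ (so that $s$ is the separation after rescaling, consistent with the factor $e^{-ns^2}$ rather than $e^{-n s^2 / 2 \cdot n}$). One also needs the uniform upper bound $|K_n| \le 1/\pi$ used in the Hadamard step, which is immediate from \eqref{e:completeproof} together with Cauchy--Schwarz, $|K_n(z,w)|^2 \le K_n(z,z)K_n(w,w)$, valid since $K_n$ is a reproducing kernel. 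Beyond these normalizations, the argument is the standard determinant-comparison trick and presents no real difficulty; the exponential gain comes entirely from Lemma~\ref{l:Kbound}, and the combinatorial prefactor $m^{1+m/2}$ is exactly what Hadamard's inequality produces.
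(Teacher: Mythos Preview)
Your proof is correct and essentially identical to the paper's: the paper cites \cite[(3.4.5)]{AGZ10} for precisely the telescoping/Hadamard determinant-comparison bound $|\det M-\det\widetilde M|\le m^{1+m/2}\max_{i,j}|M_{ij}-\widetilde M_{ij}|$ that you derive by hand, and then invokes Lemma~\ref{l:Kbound} for the off-diagonal decay. Your remark about the scaling convention (whether the $x_i$ live in $B$ or $\sqrt n B$, and the resulting $e^{-ns^2}$ versus $e^{-s^2/2}$ discrepancy) is well taken; the paper's own application at \eqref{eq:cont3} in fact bypasses this by bounding directly in terms of $|K_n|$.
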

\begin{proof}
Recall that for any determinantal point process with kernel $K(x,y)$, the restriction of the process to some set $S$ is also determinantal, with kernel $K(x,y) \one_S(x) \one_S(y)$. 
Then the conclusion is an immediate consequence of \cite[(3.4.5)]{AGZ10} and Lemma~\ref{l:Kbound}.
\end{proof}

\subsection{Infinite Ginibre process} 
Let $\xi$ be the (stationary) infinite Ginibre process on $\mathbb C$ with kernel \[K(z,w)=\frac {1}{\pi} 
e^{ - \frac{1}{2} |z|^2  - \frac{1}{2} |w|^2  + z \bar w }.\] 
Since $K_n(z,w) \to K(z,w)$ as $n \to \infty$, we have that $\xi_n \to \xi$ in distribution (see, e.g., \cite[Theorem 11.1.VII]{daley}). 

The following result is essentially a well-known theorem of Kostlan \cite[Theorem 1.1]{K92}, but we take the statement from \cite[Theorem 4.3.10]{BKPV09} to match our choice of scaling.  Let $( X_k)_{k=1}^{\infty}$ be a collection of independent random variables such that $X^2_k\sim \operatorname{Gamma}(k,1)$, where we use a shape--scale parameterization for the gamma distribution.

\begin{lemma}\label{l:gincounting}
The set of absolute values of the points of $\xi$ has the same distribution as $( X_k)_{k=1}^{\infty}$ (considered as a random set).
\end{lemma}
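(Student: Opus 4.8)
The plan is to establish the claimed distributional identity by exploiting the explicit radial structure of the Ginibre kernel $K(z,w) = \tfrac{1}{\pi} e^{-|z|^2/2 - |w|^2/2 + z\bar w}$, which is invariant under rotations $z \mapsto e^{i\theta} z$. The key algebraic observation is that $K$ expands as $K(z,w) = \tfrac 1\pi \sum_{k=0}^\infty \frac{(z\bar w)^k}{k!} e^{-|z|^2/2 - |w|^2/2}$, i.e.\ it is the reproducing kernel for the Bargmann--Fock space with orthonormal basis $\varphi_{k+1}(z) = \frac{1}{\sqrt{\pi k!}} z^k e^{-|z|^2/2}$. First I would recall the general principle, due to Kostlan, that for a determinantal process whose kernel is the projection onto a span of functions of the form $f_k(z) = g_k(|z|) e^{ik\arg z}$ (a ``radially symmetric'' determinantal structure), the set of squared moduli $\{|z_j|^2\}$ is again a determinantal process on $[0,\infty)$, and moreover its points can be realized as independent random variables. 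Applying this to $\xi$ gives the statement once we identify the distributions.

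Concretely, the main steps are as follows. First, pass to the $m$-point correlation functions of $\xi$: writing each $z_j$ in polar coordinates $z_j = \sqrt{t_j}\, e^{i\theta_j}$, the determinant $\det(K(z_i,z_j))_{i,j=1}^m$ is a sum over permutations, and after integrating out the angular variables $\theta_1,\dots,\theta_m$ only the ``diagonal'' contributions survive because of the orthogonality $\int_0^{2\pi} e^{i(k-\ell)\theta}\, d\theta = 2\pi \delta_{k\ell}$; what remains is a permanent-type expression that factors. More precisely, the induced correlation functions of the point process $\{t_j\} = \{|z_j|^2\}$ on $[0,\infty)$ take the form of a ``determinant of a diagonal-dominant'' structure that coincides with the correlation functions of a process of independent points whose densities are the $L^1$-normalizations of $\frac{t^{k-1} e^{-t}}{(k-1)!}$ for $k = 1, 2, \dots$ — but this is exactly the $\operatorname{Gamma}(k,1)$ density. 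Hence $\{|z_j|^2\}$ has the same correlation functions, and therefore (since a simple point process with locally finite intensity is determined by its correlation functions, the process being determinantal and hence moment-determinate) the same distribution, as $\{X_k^2\}_{k=1}^\infty$. Taking square roots gives the claim for the absolute values themselves. Alternatively, and more cleanly, I would simply cite \cite[Theorem 4.3.10]{BKPV09} as the excerpt already indicates, after noting that our normalization of $\xi$ (obtained as the $n\to\infty$ limit of $\xi_n$, which has intensity $\to 1/\pi$) matches the scaling used there; the honest work is just the bookkeeping that the limiting kernel $K$ here is literally the one in that reference.

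For the self-contained argument, the main obstacle is justifying the reduction ``same correlation functions $\Rightarrow$ same law'' and handling the fact that $\xi$ has infinitely many points (the process $(X_k)_{k=1}^\infty$ is an infinite but locally finite collection, since $X_k^2 \sim \operatorname{Gamma}(k,1)$ concentrates near $k$ and so only finitely many lie in any bounded region almost surely). One must check that $\{|z_j|^2\}$ is locally finite — this follows because $\xi$ itself is locally finite and the map $z \mapsto |z|^2$ is proper on bounded-modulus sets — and that the correlation functions grow slowly enough to invoke a moment-determinacy criterion (e.g.\ the bound $\rho^{(m)} \le \prod K(z_i,z_i)$ from the Hadamard-type inequality for positive-definite kernels, which gives factorial-type growth that is controlled by Carleman's condition on each bounded window). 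Given the time constraints and that the paper treats this as a known result, I would present the short version: state that the identity is \cite[Theorem 4.3.10]{BKPV09} up to the scaling conventions fixed in the preceding paragraph, verify the kernel match, and remark that it originates in \cite{K92}.
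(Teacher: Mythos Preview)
The paper does not give a proof of this lemma at all; it simply states the result and attributes it to Kostlan \cite[Theorem 1.1]{K92}, taking the precise formulation from \cite[Theorem 4.3.10]{BKPV09} to match the scaling conventions. Your proposal to cite exactly these references is therefore precisely what the paper does, and the self-contained sketch you outline goes beyond what the paper provides.
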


We note that by definition, $X^2_{k}$ has the same distribution as sum of the squares of $k$ independent  random variables, each having density $f(x) = e^{-x}$ for for $x\ge 0$. Note that  $|f(x)|\le 1$ for all $x \ge 0$, and the density $g_{k}$ of $X^2_{k}$ is the convolution of $f$ and the density $g_{k-1}$ of $X^2_{k-1}$. Then we have
\begin{equation}\label{e:gammadensitybound}
\| g_k \|_\infty = \| g_{k-1} \ast  f \|_\infty \le \| g_{k-1} \|_1 \| f \|_\infty \le 1,
\end{equation}
so the density of each $X_{k}$ is uniformly bounded by $1$.
 
\section{Auxiliary statements}

We begin by relating vacuum probabilities for the infinite Ginibre process to their analogues for the finite Ginibre process. 

\begin{lemma} \label{lem:gininf}
	For every $s \in (0,1)$, there exists $C(s)>1$ and  $\delta(s) <1$ with the following property.  For all $n  \in \mathbb N$ such that $n \ge C$ and any sequence of measurable sets $A_n \subset B_{s \sqrt n}$, we have 
	$$
	\mathbb P(\xi(A_n)=0) \le  \mathbb P(\xi_n(A_n)=0) \le (1+C \delta^n) \mathbb P(\xi(A_n)=0).
	$$
\end{lemma}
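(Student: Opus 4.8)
\textbf{Proof plan for Lemma~\ref{lem:gininf}.}

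The plan is to exploit the product structure of the vacuum probability for determinantal point processes. Recall that if $\nu$ is determinantal on a set $A$ with a trace-class kernel whose spectral decomposition on $A$ has eigenvalues $(\lambda_i)$, then $\mathbb P(\nu(A)=0) = \prod_i (1-\lambda_i)$. For both $\xi$ and $\xi_n$ the relevant operators are the integral operators on $L^2(A_n)$ associated to $K$ and $K_n$ respectively; write these as $\mathcal K_{A_n}$ and $\mathcal K_{n,A_n}$. The left inequality $\mathbb P(\xi(A_n)=0)\le\mathbb P(\xi_n(A_n)=0)$ should follow from an operator comparison: I would show $0 \preceq \mathcal K_{n,A_n} \preceq \mathcal K_{A_n} \preceq I$ as operators on $L^2(A_n)$ (the first because $K_n$ is a projection-type partial sum of the series defining $K$, namely $K(z,w)-K_n(z,w)=\sum_{k>n}\varphi_k(z)\overline{\varphi_k(w)}$ is a positive-semidefinite kernel, so $\mathcal K_{A_n}-\mathcal K_{n,A_n}\succeq 0$), and then use that $t\mapsto \log(1-t)$ is decreasing on $[0,1)$ together with the fact that $\operatorname{tr}\log(I-\mathcal A)$ is monotone in $\mathcal A$ on the cone of contractions — equivalently, Fredholm determinants satisfy $\det(I-\mathcal A)\le\det(I-\mathcal B)$ when $0\preceq\mathcal A$ and $\mathcal B\preceq\mathcal A\preceq I$ with $\mathcal A-\mathcal B$ trace class and positive. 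Since $\mathbb P(\xi_n(A_n)=0)=\det(I-\mathcal K_{n,A_n})$ and $\mathbb P(\xi(A_n)=0)=\det(I-\mathcal K_{A_n})$, this gives the left bound.

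For the right inequality I would quantify how close $\mathcal K_{n,A_n}$ is to $\mathcal K_{A_n}$. The difference kernel is $D_n(z,w)=\sum_{k=n+1}^\infty \varphi_k(z)\overline{\varphi_k(w)}$, and on $A_n\subset B_{s\sqrt n}(0)$ one has $|\varphi_k(z)|^2 = \frac{|z|^{2(k-1)}}{\pi(k-1)!}e^{-|z|^2}$, which for $|z|^2\le s^2 n$ and $k\ge n+1$ is exponentially small: by a standard Poisson/Stirling tail estimate, $\sum_{k>n}\frac{(s^2 n)^{k-1}}{\pi(k-1)!}e^{-s^2 n}\le c^{-1}\delta^n$ for some $\delta(s)<1$ (large deviations for a Poisson$(s^2n)$ variable exceeding $n$). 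Hence the Hilbert--Schmidt, and in fact trace, norm of $\mathcal K_{A_n}-\mathcal K_{n,A_n}$ on $L^2(A_n)$ is at most $C\delta^n$ (integrating $D_n(z,z)$ over $A_n$, whose area is $O(n)$, still leaves something exponentially small after absorbing the polynomial factor into a slightly larger $\delta$). Then I would use the Lipschitz bound for Fredholm determinants of contractions,
\[
\big| \det(I-\mathcal A) - \det(I-\mathcal B)\big| \le \|\mathcal A - \mathcal B\|_1 \, e^{1+\|\mathcal A\|_1 + \|\mathcal B\|_1},
\]
but since $\|\mathcal K_{A_n}\|_1=\int_{A_n}K(z,z)\,\d z$ can be of order $n$, a cruder bound is needed. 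Instead I would divide: write $\det(I-\mathcal K_{n,A_n}) = \det(I-\mathcal K_{A_n})\cdot \det\big(I + (I-\mathcal K_{A_n})^{-1}(\mathcal K_{A_n}-\mathcal K_{n,A_n})\big)$, noting $(I-\mathcal K_{A_n})^{-1}$ exists as a bounded operator since all eigenvalues of $\mathcal K_{A_n}$ are strictly below $1$ — this uses that $A_n$ is a proper subset of the plane so the projection is not full — with operator norm controlled uniformly (this is where some care is needed; one may instead use that $\mathcal K_{n,A_n}\preceq\mathcal K_{A_n}$ implies the ratio $\det(I-\mathcal K_{n,A_n})/\det(I-\mathcal K_{A_n})\ge 1$ automatically, and bound it above via $\log$ of the ratio $=\sum \log\frac{1-\mu_i}{1-\lambda_i}$ where $\mu_i\le\lambda_i$ are the two eigenvalue sequences, using $\log\frac{1-\mu}{1-\lambda}\le \frac{\lambda-\mu}{1-\lambda}$ and the eigenvalue-interlacing/majorization consequence of $\mathcal K_{n,A_n}\preceq \mathcal K_{A_n}$). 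Bounding $\sum_i \frac{\lambda_i-\mu_i}{1-\lambda_i}$ requires a uniform lower bound on $1-\lambda_i$; one gets $1-\lambda_i \ge 1 - \sup_z\int_{A_n}|K(z,w)|\,\d w$, and by Lemma~\ref{l:Kbound}-type Gaussian decay $\sup_z\int_{\mathbb C}|K(z,w)|\,\d w<\infty$ but not necessarily $<1$; a soft fix is to first reduce to the case $|A_n|\le 1$ is false — actually $A_n$ large is the issue, so one instead covers $A_n$ by unit cells and uses the Ginibre hole probability / subadditivity, or simply observes $1-\lambda_i \ge \mathbb P(\xi(A_n)=0)$ itself from the product formula, turning the ratio bound into $\log(\text{ratio}) \le \frac{\|\mathcal K_{A_n}-\mathcal K_{n,A_n}\|_1}{\mathbb P(\xi(A_n)=0)}$, which is not obviously good — so I would instead use the diagonal Kostlan description.

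The cleanest route, and the one I would actually write, uses Lemma~\ref{l:gincounting}. The set of moduli of $\xi$ is distributed as $(X_k)_{k\ge1}$ with $X_k^2\sim\operatorname{Gamma}(k,1)$, and by the same Kostlan-type theorem for the finite ensemble (\cite[Theorem 4.3.10]{BKPV09}) the set of moduli of $\xi_n$ is distributed as $(X_k)_{k=1}^n$, the \emph{same} first $n$ variables. However, vacuum probabilities of general (non-radial) sets $A_n$ are not determined by moduli alone, so this comparison is not literal; rather it motivates the following: couple $\xi_n$ and $\xi$ so that $\xi_n\subset\xi$ and $\xi\setminus\xi_n$ consists of points all of modulus $\ge \sqrt{n}(1-o(1))$ with overwhelming probability — concretely, using the kernel inclusion $K=K_n + D_n$ with $D_n\succeq 0$, one has a superposition coupling $\xi \stackrel{d}{=}\xi_n \cup \eta_n$ where $\eta_n$ is the (independent, determinantal) process with kernel $D_n$ on $L^2(\mathbb C)$; then $\{\xi(A_n)=0\}\subset\{\xi_n(A_n)=0\}$ giving the left inequality immediately, and
\[
\mathbb P(\xi_n(A_n)=0) - \mathbb P(\xi(A_n)=0) \le \mathbb P\big(\xi_n(A_n)=0,\ \eta_n(A_n)\ge 1\big)\le \mathbb P(\eta_n(A_n)\ge 1) \le \mathbb E[\eta_n(A_n)] = \int_{A_n} D_n(z,z)\,\d z \le C\delta^n,
\]
using the Poisson-tail estimate on $\int_{A_n}D_n(z,z)\,\d z=\int_{A_n}\sum_{k>n}|\varphi_k(z)|^2\,\d z$ over $A_n\subset B_{s\sqrt n}(0)$ (here the area factor $O(n)$ is harmlessly absorbed by enlarging $\delta$ to any $\delta'\in(\delta,1)$). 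Dividing by $\mathbb P(\xi(A_n)=0)$ — which is bounded below, say by $\mathbb P(\xi(B_{s\sqrt n}(0))=0)$, and this is exponentially small, not bounded below, which is a problem — so the multiplicative form $(1+C\delta^n)\mathbb P(\xi(A_n)=0)$ requires the \emph{relative} error $[\mathbb P(\xi_n(A_n)=0)-\mathbb P(\xi(A_n)=0)]/\mathbb P(\xi(A_n)=0)\le C\delta^n$, which does \emph{not} follow from the additive bound alone. The hard part is therefore this relative estimate: I expect one must go back to the Fredholm determinant ratio $\prod_i\frac{1-\mu_i}{1-\lambda_i}$ and show $\sum_i\big[\log(1-\mu_i)-\log(1-\lambda_i)\big]\le C\delta^n$, using $\mu_i\le\lambda_i$, the bound $\sum_i(\lambda_i-\mu_i)=\|\mathcal K_{A_n}-\mathcal K_{n,A_n}\|_1\le C\delta^n$, \emph{and} a uniform lower bound $1-\lambda_i\ge c>0$. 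This last bound is the genuine obstacle: it fails for large $A_n$ if taken naively, but it can be recovered by noting that only eigenvalues $\lambda_i$ with $\lambda_i-\mu_i$ non-negligible matter, and these correspond (via the explicit radial eigenfunctions $z^{k-1}$ of the full-plane Ginibre operator, which remain near-eigenfunctions on $B_{s\sqrt n}(0)$) to indices $k$ near $n$, for which the eigenvalue $\lambda_k = \mathbb P(X_k^2 \le \text{(radius)}^2)$-type quantity is bounded away from $1$ precisely because the radius is $\le s\sqrt n < \sqrt n$; making this spectral localization rigorous — i.e.\ that the perturbation $\mathcal K_{A_n}-\mathcal K_{n,A_n}$ lives on the spectral subspace of $\mathcal K_{A_n}$ where eigenvalues are $\le 1-c(s)$ — is the crux, and I would expect it to occupy the bulk of the proof.
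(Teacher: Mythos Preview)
Your left inequality is fine: the operator ordering $\mathcal K_{n,A_n}\preceq\mathcal K_{A_n}$ does give $\det(I-\mathcal K_{A_n})\le\det(I-\mathcal K_{n,A_n})$, and the paper proves the same thing via Fischer's inequality on the Gram matrices $M_m(A_n)=\big(\int_{A_n^c}\varphi_i\overline{\varphi_j}\big)_{i,j\le m}$. But for the right inequality you have a genuine gap. First, the superposition claim $\xi\stackrel{d}{=}\xi_n\cup\eta_n$ with $\eta_n$ an \emph{independent} determinantal process of kernel $D_n$ is false: $K=K_n+D_n$ as positive kernels does not imply an independent superposition decomposition of the DPPs (it only gives a monotone coupling $\xi_n\subset\xi$ via Goldman/Strassen, with the difference dependent on $\xi_n$). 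Second, you correctly diagnose that the additive bound $\mathbb E[\eta_n(A_n)]\le C\delta^n$ cannot be upgraded to the multiplicative one without a uniform lower bound on $1-\lambda_i$, and you leave the needed ``spectral localization'' as the crux without a concrete argument. Since $A_n$ may have area of order $n$, many $\lambda_i$ are near $1$, and arguing that the perturbation $\mathcal K_{A_n}-\mathcal K_{n,A_n}$ is supported on a spectral subspace where $1-\lambda_i\ge c(s)$ is not straightforward for general (non-radial) $A_n$.

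The paper avoids all of this by working not with the eigenvalues of $\mathcal K_{A_n}$ but with the Gram determinants $\mathbb P(\xi_m(A_n)=0)=\det M_m(A_n)$ directly. The key identity is the Schur-complement factorization $\det M_m(A_n)=\det M_n(A_n)\cdot\det\big(M_m(A_n)/M_n(A_n)\big)$, so the ratio $\mathbb P(\xi(A_n)=0)/\mathbb P(\xi_n(A_n)=0)$ is $\lim_{m\to\infty}\det\big(M_m(A_n)/M_n(A_n)\big)$. Now $A_n\subset B_{s\sqrt n}$ implies $M_m(A_n)\succeq M_m(B_{s\sqrt n})$ in the positive-semidefinite order, and Schur complements are monotone in this order, so it suffices to lower-bound $\det\big(M_m(B_{s\sqrt n})/M_n(B_{s\sqrt n})\big)$. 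For the \emph{ball}, $M_m(B_{s\sqrt n})$ is diagonal (the $\varphi_k$ are orthogonal on rotationally invariant domains), so this Schur complement is simply $\prod_{k=n+1}^m\int_{B_{s\sqrt n}^c}|\varphi_k|^2=\prod_{k=n+1}^m\mathbb P(R_k^2>s^2n)$ with $R_k^2\sim\operatorname{Gamma}(k,1)$; a Chernoff/Poisson-tail bound then gives $\prod_{k>n}\mathbb P(R_k^2>s^2n)\ge 1-C\delta^n$. This monotone reduction to the radial case is the missing idea in your plan.
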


\begin{proof}
We begin with a standard determinantal representation for the vacuum probability $\mathbb P(\xi(A_n)=0)$ (see, e.g., \cite[Theorem 6]{adhikari2017hole}). 
For every $m \in \mathbb{N}$, the  determinantal structure of $\xi_m$  implies that 
\begin{align*}
\mathbb P(\xi_m(A_n)=0)
&=
\frac{1}{m!}
\int_{A_n^c}\dots \int_{A_n^c} \det (K_m(z_i,z_j))_{i,j=1}^m  \, dz_1\dots dz_m\\
& = 
\frac{1}{m!}\int_{A_n^c}\dots \int_{A_n^c} \det (\varphi_k(z_i) )_{i,k=1}^m \det (\overline{\varphi_k(z_i)} )_{i,k=1}^m   \, dz_1\dots dz_m\\
&= 
\frac{1}{m!}\int_{A_n^c}\dots \int_{A_n^c}    \sum_{\sigma_1, \sigma_2  \in S_m} \operatorname{sgn}(\sigma_1 \sigma_2)  
 \prod_{i=1}^m \varphi_{\sigma_1(i)}(z_i) \overline{\varphi_{\sigma_2(i)}(z_i)}  \, dz_1\dots dz_m\\
&=
\sum_{\sigma \in S_n} \operatorname{sgn}(\sigma) \prod_{i=1}^m \int_{A_n^c} \varphi_i(z) \overline{\varphi_{\sigma(i) }(z)}\, dz\\
& =\det  \left(\int_{A_n^c} \varphi_i(w) \overline{\varphi_j(w)} dw\right)_{1\le i,j\le m},
\end{align*}
where $S_n$ is the set of all permutations of $\{1,\dots,n\}$. As noted in the proof of \cite[Theorem 6]{adhikari2017hole}, this representation implies $\mathbb P(\xi_m(A_n)=0)$ is decreasing in $m$ and converges to $\mathbb P(\xi(A_n)=0)$ as $m \to \infty$, giving the first inequality of the assertion. We give the details here for completeness. 

Let \[ M_{m}(U):=\Big(\int_{U^c} \varphi_i(w) \overline{\varphi_j(w)} dw\Big)_{1\le i,j\le m},\] so that we have $\mathbb P(\xi_m(A_n)=0)=\det(M_{m}(A_n))$. 
For every $U\subset \mathbb{C}$, $M_{m}(U)$ is the integral of a positive-definite matrix function, so it too is positive-definite. 
In particular $ M_{m}(A_n)$ is positive-definite, so by Fischer's inequality \cite[Theorem 7.8.5]{horn2012matrix},
\[
\det ( M_{m}(A_n) ) \le \det ( M_{m-1,n} )  \left(  \int_{A_n^c} \varphi_m(w)   \overline{\varphi_j(w)} dw \right) \le \det ( M_{m-1,n} ),
\]
as desired.

	 For the second assertion, note that for all $m \ge n$, the Schur complement formula implies that
	\begin{align}\label{e:intstep}
		\det M_{m}(A_n)=\det (M_{n} (A_n)) \det( M_{m}(A_n)/M_{n} (A_n)),
	\end{align}
	where $M_{m}(A_n)/M_{n} (A_n)$ is the Schur complement of the block $M_{n} (A_n)$ of the matrix $M_{m}(A_n)$. 
By our assumption that $A_n \subset B_{s \sqrt n}$, we have $M_m(A_n) \succeq M_m( B_{s \sqrt n})$, since \[
M_m(A_n) - M_m( B_{s \sqrt n})  = M_m \big( A_n \cup  B_{s \sqrt n}^c  \big) \succeq  0,
\]
where $A \succeq B$ means that $A - B$ is positive-definite for $m \times m$-matrices $A,B$.
 Then, by the monotonicity of the Schur complement \cite[7.7.P41]{horn2012matrix}, 
	\begin{align*}
		\det( M_m(A_n)/M_n(A_n)) \ge  \det( M_m(B_{s\sqrt n})/M_n(B_{s\sqrt n})), \quad m \ge n.
	\end{align*}
As $B_{s \sqrt n}$ is a circular domain, we have
\[ \int_{B_{s' \sqrt n}^c} \varphi_i(w) \overline{\varphi_j(w)} \, dw =0
\]
for all $i \neq j$. Then
	$$
	\det( M_m(B_{s \sqrt n})/M_n(B_{s \sqrt n})) \ge \prod_{k=n+1}^\infty \int_{B_{s \sqrt n}^c} |\varphi_k(w)|^2\,  \d w, \quad m \ge n.
	$$
	To show that the above is lower bounded by a constant (as $n \to \infty$), we note that 
	\[
	\int_{B_{s \sqrt n}^c} |\varphi_k(w)|^2 \d w
	=  \frac{1}{ \pi (k-1)!} \int_{B_{s \sqrt n}^c} |w|^{2(k-1)} e^{-|w|^2} \, \d w 
	 =  \frac{2}{  (k-1)!}  \int_{s \sqrt{n}}^\infty r^{2k-1} e^{-r^2}\,dr
\] 
is the probability that $R_k >s \sqrt n$, where $R_k^2$ follows a $\operatorname{Gamma}(k,1)$-distribution. 
For all $\lambda>0$, let $Y_{\lambda}$ be Poisson($\lambda$)-distributed and note that $\P(R_k^2 \le  t)=\P(Y_{t}\ge  k) $. Hence,
	\begin{align*}
		\prod_{k=n+1}^\infty \int_{B_{s \sqrt n}^c} |\varphi_k(w)|^2 \d w = \prod_{k=n+1}^{\infty} \P(R_k^2 \ge s n) \ge \prod_{k=n+1}^{\infty} \P(R_k^2 \ge s k)=\prod_{k=n+1}^{\infty} (1-\P(Y_{s k} >k)).
	\end{align*}
	Next we use a well-known version of the Chernoff bound for Poisson random variables, which gives \[\mathbb P(Y_{\lambda}>k)\le k^{-k} (e\lambda)^k e^{-\lambda}\] for all $\lambda <k$. Hence,
	$$
	\P(Y_{s k} >k) \le (e^{1-s}s)^k, \quad k \in \mathbb N.
	$$
	Let $s_2:=e^{1-s }s \in (0,1)$. Since $\log (1-x) \ge 1-\frac{1}{1-x}$ for $x \in (0,1)$, we arrive at the bound
	\begin{align*}
		\prod_{k=n+1}^\infty \int_{B_{s \sqrt n}^c} |\varphi_k(w)|^2 \d w \ge \exp\left(-\sum_{k=n+1}^\infty \frac{s_2^k}{1-s_2^k}  \right) \ge \exp\left(- \frac{s_2^{n+1}}{(1-s_2^n)(1-s_2)}  \right).
	\end{align*}
	Therefore, using \eqref{e:intstep} and taking $m\rightarrow \infty$,
	$$
	\mathbb P(\xi_n(A_n)=0)=\det M_{n}(A_n) \le (1+C s_2^{n}) \mathbb P(\xi(A_n)=0),
	$$
for sufficiently large $n$ (depending only on $s_2)$. 
In the previous inequality, we used that since $(1-s_2^n) (1-s_2)$ converges to $1-s_2$, there exists $C(s)>0$ such that for sufficiently large $n$ (depending only on $s$),
\[
\exp\left(\frac{s_2^{n+1}}{(1-s_2^n)(1-s_2)}  \right)\le \exp( C s_2^{n+1} ) \le 1 + 2C s_2^{n}.
\]
This completes the proof.
\end{proof}

\begin{lemma}\label{lem:mom2}
Fix  $s<1$ and a sequence $(s_n)_{n \in \mathbb N}$ such that $s_n \to \infty$ and $s_n \le n^{2/5}$. There exists a sequence $(p_n)_{n=1}^\infty$ (depending on $s$ and  $(s_n)_{n \in \mathbb N}$) such that $p_n \to 0$ as $n \to \infty$ and 
\[
\sup_{z \in B_{s \sqrt{n}} } \P ( \xi_n ( B_{s_n} (z)) \le 2 ) \le \exp(-s_n^4(1/4+p_n))
\]
for all $n \in \mathbb{N}$.
\end{lemma}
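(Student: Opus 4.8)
The plan is to bound $\{\xi_n(B_{s_n}(z))\le 2\}$ by three successive reductions: first replace the constraint ``$\le 2$'' by the hole event ``$=0$'' at the cost of a harmless multiplicative factor; then pass from the finite process $\xi_n$ to the stationary infinite process $\xi$ via Lemma~\ref{lem:gininf}; and finally evaluate the hole probability of $\xi$ using Kostlan's description of its moduli (Lemma~\ref{l:gincounting}) together with the Chernoff bound for the lower tail of a Poisson variable, which is where the constant $1/4$ emerges.

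\emph{Reducing $\{\le 2\}$ to $\{=0\}$.} Write $B=B_{s_n}(z)$. Since $K_n$ is Hermitian, the classical representation of point counts of determinantal processes (see, e.g., \cite[Ch.~4]{BKPV09}) shows that $\xi_n(B)$ is distributed as a sum $\sum_i\mathrm{Ber}(\lambda_i)$ of independent Bernoulli variables, where $\lambda_1\ge\lambda_2\ge\cdots$ (only finitely many nonzero) are the eigenvalues of the integral operator on $L^2(B)$ with kernel $K_n|_{B\times B}$. An elementary computation then gives $\P(\xi_n(B)\le 2)=\P(\xi_n(B)=0)\,(1+e_1+e_2)$, where $e_j$ is the $j$-th elementary symmetric function of the numbers $\lambda_i/(1-\lambda_i)$. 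To control the correction factor, note that $K-K_n=\sum_{k>n}\varphi_k\otimes\overline{\varphi_k}$ is a positive semidefinite kernel, so the operator above is dominated in the operator order by the corresponding operator for the infinite kernel $K$; by Weyl monotonicity together with the translation (gauge) invariance of $K$ one gets $\lambda_i\le\mu_i$, where $\mu_i=\P(\mathrm{Gamma}(i,1)\le s_n^2)=\P(Y_{s_n^2}\ge i)$ are the eigenvalues on the \emph{centered} disk $B_{s_n}(0)$ (for $Y_{s_n^2}$ a Poisson variable with mean $s_n^2$; this is the same computation as $\int_{B_{s\sqrt n}^c}|\varphi_k|^2\,dw$ in the proof of Lemma~\ref{lem:gininf}). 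Using $\sum_i\mu_i=s_n^2$ and $1/(1-\mu_i)=1/\P(Y_{s_n^2}\le i-1)\le e^{s_n^2}$, and splitting the sum according to whether $\mu_i\le 1/2$, one finds $e_1\le\sum_i\mu_i/(1-\mu_i)\le C s_n^2 e^{s_n^2}$ and $e_2\le e_1^2/2$, hence $1+e_1+e_2\le C s_n^4 e^{2s_n^2}$ for an absolute constant $C$ and all large $n$.

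\emph{The hole probability.} Because $s_n\le n^{2/5}=o(\sqrt n)$ and $|z|\le s\sqrt n$, we have $B_{s_n}(z)\subset B_{s'\sqrt n}(0)$ for any fixed $s'\in(s,1)$ once $n$ is large, so Lemma~\ref{lem:gininf} (applied with parameter $s'$) yields $\P(\xi_n(B_{s_n}(z))=0)\le(1+C\delta^n)\,\P(\xi(B_{s_n}(z))=0)$, and by stationarity of $\xi$ the right side equals $(1+C\delta^n)\,\P(\xi(B_{s_n}(0))=0)$. By Lemma~\ref{l:gincounting} the hole probability of the centered disk is $\prod_{k\ge1}(1-\mu_k)$ with $\mu_k$ as above. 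For $k\le\lfloor s_n^2\rfloor$ we have $1-\mu_k=\P(Y_{s_n^2}\le k-1)$ with $k-1<s_n^2$, so the Poisson lower-tail Chernoff bound gives $-\log(1-\mu_k)\ge s_n^2\,h\!\bigl((k-1)/s_n^2\bigr)$, where $h(x)=x\log x-x+1$ is decreasing with $0\le h\le 1$ and $\int_0^1 h(x)\,dx=\tfrac14$. Dropping the remaining nonnegative terms and bounding the left Riemann sum of $h$ from below by its integral (the error being $O(1)$ because $h$ is monotone and bounded), we obtain $-\sum_{k\ge1}\log(1-\mu_k)\ge s_n^2(s_n^2/4-1)$, i.e., $\P(\xi(B_{s_n}(0))=0)\le e^{-s_n^4/4+s_n^2}$.

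\emph{Conclusion and main difficulty.} Combining the three steps, $\P(\xi_n(B_{s_n}(z))\le 2)\le C' s_n^4\, e^{-s_n^4/4+3s_n^2}$ uniformly in $z\in B_{s\sqrt n}$ for all large $n$ (the $3s_n^2$ being $2s_n^2$ from the correction factor plus $s_n^2$ from the hole probability), so one may take $p_n:=-(3s_n^2+4\log s_n+\log C')/s_n^4$ (and, say, $p_n:=-1/4$ for the finitely many small $n$, where only $\P(\cdot)\le 1$ is used); then the bound holds for all $n$ and $p_n\to0$ since $s_n\to\infty$. The step I expect to be the main obstacle is the control of the multiplicity correction $1+e_1+e_2$: it must be estimated without letting the matrix size $n$ enter the bound, since $s_n$ may grow far more slowly than any power of $n$, so that even a stray factor of $\log n$ would overwhelm the leading term $s_n^4/4$. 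In particular the crude bound $e_1\le n\max_i\lambda_i/(1-\lambda_i)$ is useless, and one must instead exploit the $n$-free operator comparison $\lambda_i\le\P(Y_{s_n^2}\ge i)$, which keeps the correction factor of size $e^{O(s_n^2)}=e^{o(s_n^4)}$. A secondary point is to keep the Riemann-sum error in the hole-probability estimate at order $s_n^2$, so that it too can be absorbed into $p_n$.
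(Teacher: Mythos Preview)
Your argument is correct and takes a genuinely different route from the paper. The paper handles the event $\{\xi_n(B_{s_n}(z))\le 2\}$ combinatorially: it partitions $B_{s_n}(z)$ into $M=e^{s_n^3}$ thin concentric annuli $A_1,\dots,A_M$, uses the inclusion $\{\xi_n(B_{s_n}(z))\le 2\}\subset\bigcup_{i<j}\{\xi_n(B_{s_n}(z)\setminus(A_i\cup A_j))=0\}$ together with a union bound, and then estimates each hole probability via Lemma~\ref{lem:gininf} and Kostlan's representation, balancing the $\binom{M}{2}$ prefactor against the annulus width. You instead invoke the Bernoulli structure of determinantal counts to factor $\P(\xi_n(B)\le 2)=\P(\xi_n(B)=0)(1+e_1+e_2)$ and bound the correction spectrally, via the operator inequality $K_n\le K$, Weyl monotonicity, and the gauge (translation) invariance of $K$ to pass to the centered disk. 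Your route is more conceptual and avoids the somewhat ad hoc choice of $M$; the paper's route is more elementary in that it never appeals to the Bernoulli decomposition or operator comparison. Both ultimately reduce to the same sum $\sum_{k\le s_n^2} h((k-1)/s_n^2)\approx s_n^2/4$ with $h(x)=x\log x - x + 1$ (the paper evaluates it via Euler--Maclaurin, you via a Riemann-sum bound), and both correctly isolate the key requirement---that all correction terms be of size $e^{O(s_n^2)}$, with no dependence on $n$---which you flag explicitly as the main difficulty.
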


\begin{proof}
Let $M$ be a large integer, to be determined later. Let $A_1, \dots, A_M$ be a series of annuli centered at $z$, where $A_i$ has inner radius $(i-1) s_n/M$ and outer radius $i s_n/M$. Then the annuli are disjoint, and their union is $B_{s_n}(z)$. For all $i < j  \le M$, set $D_{i,j}=  B_{s_n} (z) \setminus ( A_i \cup A_j) $. Then we have the inclusion of events
\[
\{ \xi_n ( B_{s_n} (z)) \le 2 \} \subset \bigcup_{i< j \le M}  \{ \xi_n ( D_{i,j} ) = 0\}. 
\]
By a union bound,
\[
\P ( \xi_n ( B_{s_n} (z)) \le 2) \le \sum_{i<j \le M}  \P( \xi_n ( D_{i,j} ) = 0).
\]
Let $\delta \in (0,1)$ denote the constant from  Lemma \ref{lem:gininf}.  This lemma gives
\[
\P( \xi_n ( D_{i,j} ) = 0) \le (1+ C \delta^n) \P( \xi( D_{i,j} ) = 0)
\]
for all $i  < j < m$, 
since $B_{s_{n}}(z) \subset B_{s' \sqrt{n}}(0)$ for $s ' = (s+1)/2$ and all $z \in B_{s\sqrt{n}}$, by the assumed upper bound on $s_n$. 
Therefore,
\begin{equation}\label{unionbd}
	\P ( \xi_n ( B_{s_n} (x)) \le 2) \le (1+ C \delta^n)  \sum_{i<j \le M}  \P( \xi ( D_{i,j} ) = 0).
\end{equation}

Now, since infinite Ginibre process is stationary, we may suppose that each $D_{i,j}$ is centered at the origin and use the representation  stated in Lemma~\ref{l:gincounting}. Let $(X_k)_{k \in \mathbb{N} }$ be an infinite collection of independent random variables such that $X^2_k\sim \operatorname{Gamma}(k,1)$. By Lemma~\ref{l:gincounting},
\[
\P( \xi ( D_{i,j} ) = 0)  = \P(  \#\{ k \in \mathbb N : X_{k} \in |D_{i,j} | \} =0 ) =  \P (  X_{k} \notin |D_{i,j}| \, \forall k ) =\prod_{k=1}^\infty \P( X_{k} \notin |D_{i,j}|) ,
\]
where $|D_{i,j}|$ is the projection of $D_{i,j}$ to the $r$-axis in the $(r,\theta)$ plane. Inserting this into \eqref{unionbd}, we obtain 
\begin{align}
\begin{split}
\label{e:annulibound}
	\P ( \xi_n ( B_{s_n} (x)) \le 2)
	&\le (1+C\delta^n) \sum_{i < j \le M} 
	\prod_{k=1}^\infty \P( X_{k} \notin |D_{i,j}|)\\
	&\le 
	(1+C\delta^n) \sum_{i < j \le M} 
	\prod_{k=1}^\infty
	\left( \P(X_{k} \in |A_i|) +  \P(X_{k} \in |A_j|) +\P(X^2_{k} > s_n^2) \right).
\end{split}
\end{align}
As observed in \eqref{e:gammadensitybound}, the density of $X^2_{k}$ is uniformly bounded by 1 for all $k \in \mathbb N$. Hence, for all $i$ such that $1 \le i \le M$, 
\begin{align*}
	\P(X_{k}  \in |A_i|)&=\P\Big( \big((i-1)s_n/M\big)^2\le  X^2_{k} \le (is_n/M)^2\Big) \le 
	(2i-1) (s_n/M)^2\le 2s_n^2/M.
\end{align*}
The moment generating function of $X^2_{k}$ is well known, and we have for any $t < 1$ that 
\[
\P( X^2_{k} \ge s_n^2) \le e^{ - t s_n^2} \E [ e^{ t X^2_{k}} ] \le e^{ - t s_n^2}  ( 1 -  t)^{-k}.
\]
Optimizing this bound by taking $t = 1 -  k/s_n^2$ gives 
\[\P(X^2_{k} > s_n^2)\le \exp\big(-s_n^2 +k - k\log ( k/s_n^2)\big).\] 
Hence, with $M:=e^{s_n^3}$ we obtain from \eqref{e:annulibound} that 
\[
\P ( \xi_n ( B_{s_n} (x)) \le 2) \le 
(1+C \delta^n) e^{2 s^3_n} \prod_{k=1}^{s_n^2 } \left ( 4 s_n e^{ - s_n^3}  +  \exp\left(-s_n^2+k-  k\log ( k /s_n^2)\right) \right).
\]
For sufficiently large $n$, depending on the choice of  $(s_n)_{n \in \mathbb N}$, we have  uniformly for all $k \le s_n^2$ that
\[
4 s_n e^{ - s_n^3}  +  \exp\left(-s_n^2+k-2k\log (k /s_n^2)\right) \le  \exp\left(- s_n^2 +k - k\log (k /s_n^2) +s_n^{-5/2}\right).
\]
 Then, bounding $(1 + C \delta^n) \le e^{s_n^3}$, we have 
\begin{align*}
\P ( \xi_n ( B_{s_n} (x)) \le 2) &\le
	e^{3 s^3_n} \prod_{k=1}^{s_n^2} 
	\exp\left(-s_n^2 + k - k\log (k /s_n^2)+s_n^{-5/2}\right) \\
	& \le \exp\left(- s^4_n ( 1/4 + p_n ) \right).
\end{align*}
In the last inequality, we used the elementary identity
\[
\sum_{k=1}^{s_n^2}   k \big(1 +   \log (s^2_n)  \big)  
 =
\big(1 +   \log (s^2_n)  \big)    \frac{ s_n^2  (s_n^2 + 1 ) }{2}, 
\]
and 
\[
\sum_{k=1}^{s_n^2}   k \log k   = \frac{ s_n^4}{2} \log(s_n^2)  - \frac{ s_n^4}{4} + \frac{s_n^4 }{2} \log (s_n^2) + O\big ( \log (s_n^2)\big),
\]
by the Euler--Maclaurin formula. 
This concludes the proof.
\end{proof}

Now we can deduce the asymptotic scaling of the sequence $(r_n)_{n \in \mathbb N}$. We recall that $r_n$ depends on a parameter $\kappa>0$ that we suppress in the notation.

\begin{lemma} \label{lem:int}
For every Borel set $B$ with $\sup_{z \in B} |z|<1$, and all $\kappa>1$, we have
	\begin{align*}
		\sup_{z \in \sqrt n B}\Big|\frac{r_n(z)^4}{4 \log n} -1\Big|\to 0\quad \text{as }n \to \infty.
	\end{align*}
\end{lemma}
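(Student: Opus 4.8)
\emph{Proof plan.} Fix $B$ with $s_0 := \sup_{z \in B}|z| < 1$ and $\kappa > 1$, and fix $s \in (s_0,1)$, so that $\sqrt n B \subseteq B_{s\sqrt n}(0)$. The plan is to reduce the claim to a two-sided estimate on the vacuum probability $g_z(r) := \P(\xi_n^{z!}(B_r(z)) = 0)$, and then to obtain that estimate by comparing $\xi_n^{z!}$ with $\xi_n$ and with the infinite Ginibre process $\xi$. First I would record a characterization of $r_n(z)$: for $n$ large and $z \in B_{s\sqrt n}(0)$, the function $g_z$ is continuous (shown inside the proof of Lemma~\ref{l:requality}) and clearly nonincreasing on $[0,\infty)$, with $g_z(0) = 1$ and $g_z(\infty) = 0$, while Lemma~\ref{l:requality} together with the bounds $\rho_n(z) \in [\tfrac{1}{2\pi}, \tfrac{1}{\pi}]$ from \eqref{e:completeproof} gives $g_z(r_n(z)) = \kappa/(n\rho_n(z)) \in (0,1)$; hence $r_n(z) \le r$ if and only if $g_z(r) \le \kappa/(n\rho_n(z))$. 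It therefore suffices to show that, for each fixed $\epsilon \in (0,1)$ and all large $n$, uniformly in $z \in B_{s\sqrt n}(0)$,
\begin{equation*}
g_z\big((4(1+\epsilon)\log n)^{1/4}\big) < \frac{\kappa}{n\rho_n(z)} < g_z\big((4(1-\epsilon)\log n)^{1/4}\big),
\end{equation*}
since letting $\epsilon \downarrow 0$ then forces $r_n(z)^4/(4\log n) \to 1$ uniformly on $\sqrt n B$.

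For the upper inequality, set $\underline r := (4(1+\epsilon)\log n)^{1/4}$. Using the coupling of Section~\ref{sec:coup}, under which $\xi_n^{z!} \subseteq \xi_n$ and $|\xi_n \setminus \xi_n^{z!}| \le 1$ almost surely, the event $\{\xi_n^{z!}(B_{\underline r}(z)) = 0\}$ is contained in $\{\xi_n(B_{\underline r}(z)) \le 2\}$, so $g_z(\underline r) \le \P(\xi_n(B_{\underline r}(z)) \le 2)$. Lemma~\ref{lem:mom2} applied with $s_n = \underline r$ (which satisfies $s_n \to \infty$ and $s_n \le n^{2/5}$ for large $n$) bounds the right-hand side by $\exp(-\underline r^4(\tfrac14 + p_n)) = n^{-(1+\epsilon)(1+4p_n)}$ with $p_n \to 0$, hence by $n^{-1-\epsilon/2} < 1/n \le \kappa/(n\rho_n(z))$ for $n$ large; this is uniform in $z$ because Lemma~\ref{lem:mom2} and \eqref{e:completeproof} are.

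For the lower inequality, set $\overline r := (4(1-\epsilon)\log n)^{1/4}$. Since $\overline r = o(\sqrt n)$, for $n$ large we have $B_{\overline r}(z) \subseteq B_{\tilde s\sqrt n}(0)$ with $\tilde s := (s+1)/2 < 1$, so combining the inclusion $\xi_n^{z!} \subseteq \xi_n$ (equivalently, the stochastic domination \eqref{Palmdom}), the first inequality of Lemma~\ref{lem:gininf}, and the stationarity of $\xi$ yields
\begin{equation*}
g_z(\overline r) \ge \P(\xi_n(B_{\overline r}(z)) = 0) \ge \P(\xi(B_{\overline r}(z)) = 0) = \P(\xi(B_{\overline r}(0)) = 0).
\end{equation*}
Thus the lower inequality — and with it the whole lemma — reduces to the hole-probability bound $\log \P(\xi(B_r(0)) = 0) \ge -\tfrac{r^4}{4}(1 + o(1))$ as $r \to \infty$: granting this, $\P(\xi(B_{\overline r}(0)) = 0) \ge \exp(-(1-\epsilon)(1+o(1))\log n) \ge n^{-1+\epsilon/2} > 2\pi\kappa/n \ge \kappa/(n\rho_n(z))$ for $n$ large.

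Establishing the hole-probability bound is the main obstacle; the comparisons above are otherwise immediate from results already in hand. By Kostlan's representation (Lemma~\ref{l:gincounting}), $\P(\xi(B_r(0)) = 0) = \prod_{k \ge 1}\P(X_k^2 > r^2)$ with $X_k^2 \sim \operatorname{Gamma}(k,1)$ independent, so I would estimate $-\log \P(\xi(B_r(0)) = 0) = \sum_{k\ge1}\big(-\log \P(X_k^2 > r^2)\big)$ termwise, splitting the sum at $k \approx r^2$. For $k \le r^2$, the density of $X_k^2$ is decreasing on $[r^2,\infty)$, so $\P(X_k^2 > r^2) \ge (r^2)^{k-1}e^{-r^2-1}/\Gamma(k)$, and Stirling's formula gives $-\log\P(X_k^2 > r^2) \le \phi(k) + O(\log(rk))$, where $\phi(t) := r^2 - t + t\log(t/r^2)$ is nonnegative and decreasing on $(0, r^2]$; summing and comparing with $\int_0^{r^2}\phi(t)\,\d t = \tfrac{r^4}{4}$ (obtained from $t = xr^2$ and $\int_0^1(1 - x + x\log x)\,\d x = \tfrac14$) bounds this head of the sum by $\tfrac{r^4}{4} + O(r^2\log r)$. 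For $k > r^2$, a Chernoff bound gives $\P(X_k^2 \le r^2) \le e^{-\phi(k)}$ with $\phi$ now increasing; splitting once more at $k = r^2 + Cr$ and using $\P(X_k^2 > k) \ge c_1 > 0$ for all $k$ (valid since this probability is positive for each $k$ and tends to $1/2$ as $k \to \infty$) to bound the $O(r)$ terms in the critical window $r^2 < k \le r^2 + Cr$, together with $\sum_{k > r^2 + Cr}e^{-\phi(k)} = O(r)$ for the remaining terms, shows the tail of the sum is $O(r) = o(r^4)$. Adding the two contributions yields $-\log\P(\xi(B_r(0)) = 0) \le \tfrac{r^4}{4}(1 + o(1))$, as required. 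The upper-bound analogue of this computation is essentially the content of the proof of Lemma~\ref{lem:mom2}, so in practice only the lower bound needs to be carried out, with the indicated care near $k \approx r^2$.
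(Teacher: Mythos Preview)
Your proposal is correct and follows essentially the same route as the paper: both directions rely on the chain $\P(\xi_n^{z!}(B_r(z))=0) \ge \P(\xi_n(B_r(z))=0) \ge \P(\xi(B_r(0))=0)$ (via \eqref{Palmdom}, Lemma~\ref{lem:gininf}, and stationarity) for the lower bound, and on the coupling of Section~\ref{sec:coup} together with Lemma~\ref{lem:mom2} for the upper bound. Your framing---sandwiching $r_n(z)$ by comparing $g_z$ at the explicit radii $(4(1\pm\epsilon)\log n)^{1/4}$---is a clean equivalent of the paper's $\liminf/\limsup$ argument.

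The one substantive difference is that the paper simply invokes \cite[Proposition~7.2.1]{BKPV09} for the asymptotic $\log\P(\xi(B_r(0))=0) = -\tfrac{r^4}{4}(1+o(1))$, whereas you reprove the needed lower bound from scratch via Kostlan's representation. Your sketch of this computation is sound (the integral bound for $k\le r^2$, the Chernoff bound and the split at $k\approx r^2+Cr$ for $k>r^2$ all work as stated); it is just unnecessary given the citation. Two minor remarks: since $|\xi_n\setminus\xi_n^{z!}|\le 1$ under the coupling, you actually get $\{\xi_n^{z!}(B_{\underline r}(z))=0\}\subset\{\xi_n(B_{\underline r}(z))\le 1\}$, though your weaker $\le 2$ is of course also valid and matches Lemma~\ref{lem:mom2} verbatim; and in the critical-window step you use $\P(X_k^2>r^2)\ge \P(X_k^2>k)\ge c_1$ for $k>r^2$, which is correct but worth making explicit.
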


\begin{proof}
From the domination property 
\eqref{Palmdom},
 Lemma \ref{lem:gininf}, and stationarity of the infinite Ginibre process $\xi$, we have that for all $z \in \sqrt n B$,
\begin{align}\label{e:combine}
	\P(\xi_n^{z!}(B_{r_n(z)}(z))=0) \ge 	\P(\xi_n(B_{r_n(z)}(z))=0) \ge 	\P(\xi(B_{r_n(z)}(0))=0).
\end{align}
Moreover, by 
	Lemma~\ref{l:requality}, there exists $C(B,\kappa) > 1$ such that for $n \ge C$, 
\begin{align}\label{e:rdefrecall}
	n\P(\xi_n^{z!}(B_{r_n}(z))=0) \rho_n(z) =\kappa ,\quad z \in B_{\sqrt n}(0),
\end{align}
	where $\rho_n(z)=K_n(z,z)$ is the intensity function of $\xi_n$. Since $  \rho_n( z_n) \uparrow \frac 1 \pi$ uniformly as $n \to \infty$ for all sequences $(z_n)_{n=1}^\infty$ such that $z_n \in \sqrt{n}B$, we have for $n$ sufficiently large (depending only on the choice of $B$) that 
	$$
	\frac 1{2\pi  } \le \rho_n(z) \le \frac 1{\pi },\quad z \in \sqrt{n} B.
	$$
	In combination with \eqref{e:combine} and \eqref{e:rdefrecall}, this gives
\begin{align}
\frac{\pi c }{n}
\le
\inf_{z \in \sqrt n B} \P(\xi(B_{ r_n(z)}(0))=0) 
 \qquad 
\sup_{z \in \sqrt n B} \P(\xi(B_{ r_n(z)}(0))=0) \le \frac{2\pi c}{n}.\label{eq:nginbou}
\end{align}
Moreover, from \cite[Proposition 7.2.1]{BKPV09} we have
\begin{align}
\lim_{r\rightarrow \infty} \frac{1}{r^4}  \log\P(\xi (B_{r}(0))=0) = - \frac{1}{4}. \label{eq:ginas}
\end{align}
Define the sequence 
$(v_n)_{n \in \mathbb N}$
by  $v_n = \inf_{z \in \sqrt n B} r_n(z)$.
Using
\[\P(\xi(B_{v_n}(0))=0) =
\sup_{z \in \sqrt n B} \P(\xi(B_{ r_n(z)}(0))=0), \]
together with \eqref{eq:nginbou} and \eqref{eq:ginas}, we find that $(v_n)_{n \in \mathbb N}$ is unbounded.
Then  \eqref{eq:nginbou} and \eqref{eq:ginas} yield
\begin{align}
		\liminf_{n \to \infty} \inf_{z\in \sqrt nB}\frac{r_n(z)^4}{4 \log n}\ge -	\liminf_{n \to \infty} \frac{\inf_{z\in \sqrt nB} r_n(z)^4}{4 \log  \P(\xi(B_{v_n}(0))=0) + 4 \log 2\pi c }  =  1.\label{e:suplower}
\end{align}
	
To derive the complementary bound, we first estimate the vacuum probability under the reduced Palm measure $\xi_n^{z!}$. 
Using the coupling of $\xi_n$ and $\xi_n^{z!}$ introduced in Section~\ref{sec:coup}, we have
\begin{equation} \label{e:couplingbound2}
	\mathbb P(\xi_n^{z!}(B_{r_n}(z))=0) \le \mathbb P(\xi_n(B_{r_n}(z)) \le 1).
\end{equation}
Define the sequence 
$(\tilde v_n)_{n \in \mathbb N}$
by  $\tilde v_n = \sup_{z \in \sqrt n B} r_n(z)$. By Lemma~\ref{lem:mom2}, 
	$$
	\inf_{z \in \sqrt n  B} \P(\xi_n(B_{r_n}(z))\le 1) \le \exp\Big(-\tilde v_n^4  (1/4+p_n)
	\Big),
	$$
	where $p_n \to 0$ as $n \to \infty$. 
In combination with \eqref{e:combine},  \eqref{eq:nginbou}, and \eqref{e:couplingbound2}, this gives 
\begin{equation}\label{e:supupper}
\limsup_{n \to \infty} \sup_{z \in \sqrt n B} \frac{r_n(z)^4}{4 \log n}\le 1.
\end{equation}
Combining \eqref{e:suplower} and \eqref{e:supupper} completes the proof.
\end{proof}

\section{Proofs of Main Results}\label{sec:pr1}

In the next proof, we will need a slight modification of \cite[Theorem 3.1]{BSY21}, which provides an estimate on the KR distance between two point processes that is based on Stein's method. 
Since we are unable to verify that $\E[(\Xi_n[\xi_n]\Delta \Xi_n[\xi_n^{z!, \varnothing}])(B)]$ is measurable as a function of $z$ for an arbitrary $B \subset B_1(0)$, we cannot apply \cite[Theorem 3.1]{BSY21} directly to obtain the desired result.  Instead, we adapt the arguments given there to our setting.

\begin{lemma}\label{l:august}
Fix $\kappa >1$, and let  $\zeta_\kappa$ be a Poisson process with intensity measure $L_n$.  Fix $t \in (0,1)$. There exists a constant $C(t, \kappa)>1 $ such that for all $B \subset B_t(0)$ and $n \ge C$, we have 
	\begin{align}
	\begin{split}\label{eq:KRbou}
		&{d_{\mathrm{KR}}}(\Xi_n \cap B,\zeta_\kappa \cap B) \\ & \le  2\Big\{ \int_{B} \E\big[\Xi_n(T_{n,z}) \big] L_n(\d z)+ \int_{B}\E\big[\Xi_n^{z!}(T_{n,z}) \big] \, L_n(\d z) + L_n(B) \sup_{z \in B}\E \big[(\tilde\Xi_n \Delta \tilde\Xi_n^{z!}) (S_{n,z}) \big] \Big\},
		\end{split}
	\end{align}
where $T_{n,z}=B_{\log n/\sqrt n}(z)$, $S_{n,z} = B \setminus T_{n,z}$, and for any $A\subset\mathbb{C}$,
\[ \tilde \Xi_n (A) := \Xi_n\left[\xi_n|_{B_{r_{n}}(\sqrt nz)^c} \right] (A \cap {S_{n,z}} ), 
\qquad \tilde \Xi_n^{z!}(A):= \Xi_n\left[
\xi_n^{\sqrt n z, \varnothing}|_{B_{r_{n}}(\sqrt nz)^c}
 \right] ( A \cap {S_{n,z}})
 \]
 with $\xi_n|_{B_{r_{n}}(\sqrt nz)^c}$ and $\xi_n^{\sqrt n z, \varnothing}|_{B_{r_{n}}(\sqrt nz)^c}$ coupled as in  Section~\ref{sec:coup}.\footnote{We recall that the construction of this coupling used Lemma~\ref{l:stocdom}, which requires that $n$ be sufficiently large in a way that depends only on $\kappa$ and $g$; this requirement is absorbed into the constant $C(t,\kappa)$ in the statement of this lemma.}
\end{lemma}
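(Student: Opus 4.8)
The plan is to run Stein's method for Poisson process approximation, following the proof of \cite[Theorem 3.1]{BSY21} but keeping the comparison of $\Xi_n$ with its reduced Palm version localized away from $z$. Fix $h\in\text{Lip}$, let $\mathcal L$ be the generator of the spatial birth--death process on $B$ with unit per-point death rate and birth intensity $L_n|_B$, for which $\zeta_\kappa\cap B$ is the stationary distribution, and let $g=g_h$ solve the Stein equation $\mathcal L g(\omega)=h(\omega)-\E[h(\zeta_\kappa\cap B)]$. The only property of $g$ I need is the one-point bound $|g(\omega+\delta_z)-g(\omega)|\le 1$, standard for Poisson process approximation with $1$-Lipschitz (in $d_{\text{TV}}$) test functions (see \cite{BSY21}), which by telescoping gives $|g(\omega_1)-g(\omega_2)|\le(\omega_1\Delta\omega_2)(\mathbb C)$ for all finite $\omega_1,\omega_2$ supported in $B$. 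Taking expectations over $\Xi_n\cap B$, expanding $\E[\mathcal L g(\Xi_n\cap B)]$ with the Palm formula \eqref{defPalm0} applied to $f(z,\omega)=g(\omega-\delta_z)-g(\omega)$, and using that $\Xi_n^z=\Xi_n^{z!}+\delta_z$ almost surely, the Stein equation yields
\begin{equation*}
\E[h(\Xi_n\cap B)]-\E[h(\zeta_\kappa\cap B)]=\int_B\big(\E[D_zg(\Xi_n\cap B)]-\E[D_zg(\Xi_n^{z!}\cap B)]\big)\,L_n(\d z),
\end{equation*}
with $D_zg(\omega):=g(\omega+\delta_z)-g(\omega)$, so it suffices to bound the integrand for each fixed $z\in B$.

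Fix $z\in B$ and write $B=(B\cap T_{n,z})\sqcup S_{n,z}$. Using $D_zg(\omega_1)-D_zg(\omega_2)=(g(\omega_1+\delta_z)-g(\omega_2+\delta_z))-(g(\omega_1)-g(\omega_2))$ together with the telescoping bound above, passing from $\Xi_n\cap B$ to $\Xi_n\cap S_{n,z}$ costs at most $2\,\Xi_n(B\cap T_{n,z})\le2\,\Xi_n(T_{n,z})$ in $D_zg$, and passing from $\Xi_n^{z!}\cap S_{n,z}$ back to $\Xi_n^{z!}\cap B$ costs at most $2\,\Xi_n^{z!}(T_{n,z})$. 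After taking expectations and integrating against $L_n$, these contributions become the first two terms of \eqref{eq:KRbou}.

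The essential step is to compare $D_zg$ on $\Xi_n\cap S_{n,z}$ and on $\Xi_n^{z!}\cap S_{n,z}$, and here I would exploit that $T_{n,z}=B_{\log n/\sqrt n}(z)$ is much larger than the stabilization radius: by Lemma~\ref{lem:int}, $r_n(w)^4\sim4\log n$ uniformly for $w\in\sqrt n B_t(0)$, so for $n\ge C(t,\kappa)$ we have $r_n(w)+r_n(\sqrt nz)<\log n$ for all such $w$. Consequently, for every $w\in\sqrt n S_{n,z}$ (which satisfies $|w-\sqrt nz|\ge\log n$) the ball $B_{r_n(w)}(w)$ is disjoint from $B_{r_n}(\sqrt nz)$ and $w\notin B_{r_n}(\sqrt nz)$; hence deleting $B_{r_n}(\sqrt nz)$ from a configuration changes neither which points of $\sqrt nS_{n,z}$ are retained in the thinning defining $\Xi_n$ nor their near-neighbor vacancy indicators. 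This gives $\Xi_n\cap S_{n,z}=\tilde\Xi_n$; applying the same observation to the Palm process, writing $\Xi_n^z=\Xi_n[\xi_n^{\sqrt nz,\varnothing}]$ via Remark~\ref{r:XiPalm} and noting $z\notin S_{n,z}$, gives $\Xi_n^{z!}\cap S_{n,z}=\tilde\Xi_n^{z!}$. Since $\tilde\Xi_n$ and $\tilde\Xi_n^{z!}$ are supported in $S_{n,z}$ and coupled as in Section~\ref{sec:coup}, the telescoping bound on $g$ yields $|D_zg(\tilde\Xi_n)-D_zg(\tilde\Xi_n^{z!})|\le2\,(\tilde\Xi_n\Delta\tilde\Xi_n^{z!})(S_{n,z})$.

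Combining the three estimates, for each $z\in B$ the integrand is at most $2(\E[\Xi_n(T_{n,z})]+\E[\Xi_n^{z!}(T_{n,z})]+\E[(\tilde\Xi_n\Delta\tilde\Xi_n^{z!})(S_{n,z})])$. Before integrating, I would bound the last term by $\sup_{z'\in B}\E[(\tilde\Xi_n\Delta\tilde\Xi_n^{z'!})(S_{n,z'})]$, a number not depending on $z$; this avoids the possible non-measurability of $z\mapsto\E[(\tilde\Xi_n\Delta\tilde\Xi_n^{z!})(S_{n,z})]$ flagged before the lemma, since $\E[\Xi_n(T_{n,z})]$ is constant in $z$ by Lemma~\ref{l:int}, $z\mapsto\E[\Xi_n^{z!}(T_{n,z})]$ is measurable by standard Palm theory, and the last term is now constant. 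Integrating over $B$ against $L_n$ and taking the supremum over $h\in\text{Lip}$ then gives \eqref{eq:KRbou}. The step I expect to be the main obstacle is the identification $\Xi_n^{z!}\cap S_{n,z}=\tilde\Xi_n^{z!}$: it relies on the stabilization estimate $r_n\ll\log n$ and on carefully tracking how deleting the small ball $B_{r_n}(\sqrt nz)$---and, for the Palm process, the atom at $\sqrt nz$---affects the thinning, and it is precisely the reason the lemma must be proved by adapting \cite[Theorem 3.1]{BSY21} rather than invoking it directly.
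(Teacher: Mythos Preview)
Your proposal is correct and follows essentially the same route as the paper. The only cosmetic difference is that the paper works with the Glauber semigroup $P_s$ and the bound $|P_sh(\omega_1)-P_sh(\omega_2)|\le e^{-s}(\omega_1\Delta\omega_2)(\mathbb C)$ from \cite[(2.9)]{BSY21}, integrating in $s$ at the end, whereas you invoke the Stein solution $g$ directly with the one-point bound $|D_zg|\le1$; since $g$ in \cite{BSY21} is built from the semigroup and your bound is exactly the $s$-integral of theirs, the two presentations are equivalent and produce the same three terms with the same constant $2$.
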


\begin{proof}
Let $\mathcal L$ be the generator of the Glauber dynamics for the Poisson process $\zeta_\kappa$ (see \cite[(2.7)]{BSY21}), which is given by
\[
\mathcal L h(\omega) = \int_{\mathbb C} D_z h(\omega) L_n( \d z) - \int_{\mathbb C} D_z h(\omega - \delta_z ) \omega( \d z),
\]
where $D_z h(\omega) = h(\omega + \delta_z) - h(\omega)$ and $\mathcal L$ acts on functions  $h\colon \widehat{\mathbf{N}} \rightarrow \R$. Let $P_s$ be the Markov semigroup associated with $\mathcal L$.
The proof of \cite[Theorem 3.1]{BSY21}, specialized to our setting, yields
\begin{align}
\begin{split}
{d_{\mathrm{KR}}}(\Xi_n\cap B, \zeta_\kappa \cap B) &\le  \sup_{h \in \text{Lip}}  \int_0^\infty \big |\E[ \mathcal L P_s h(\Xi_n \cap B)] \big| \d s\\ 
	&= \sup_{h \in \text{Lip}}   \int_0^\infty \left| \int_B \left( \E \big[ D_z P_s h(\Xi_n \cap B )\big] - \E \big[D_z P_s h(\Xi_n^{z!} \cap B)\big] \right)  L_n(\d z)   \right| \d s.
\label{e:poissoncoupling}
\end{split}
\end{align}
In the following we will discuss how to bound the right-hand side of \eqref{e:poissoncoupling}. This will involve a coupling outside of a small ball around an arbitrary $z\in B$ and a rough union bound within this ball. 
By the definition of $D_z$, we have 
\begin{align}
\begin{split}
& \Big| \E \big[ D_z P_s h(\Xi_n \cap B )\big] - \E \big[D_z P_s h(\Xi_n^{x!} \cap B)\big] \Big| \\ 
 &= 
\left| \E\left[  P_s h( \Xi_n \cap B  + \delta_z )  -  P_s h( \Xi_n^{z!}  \cap B  + \delta_z ) 
 - 
  P_s h( \Xi_n \cap B  )  +  P_s h( \Xi_n^{z!}  \cap B  )  \right]\right|.
\label{e:split1}
\end{split}
\end{align}
By \cite[(2.9)]{BSY21},
\begin{align*}
\E\left[  \big| P_s h( \Xi_n \cap B  )  -  P_s h( \Xi_n \cap  S_{n,z}  )  \big|  \right] &\le e^{-s} \E \big[\Xi_n ( T_{n,z})\big],
\\
\E\left[  \big| P_s h( \Xi^{z!}_n \cap B  )  -  P_s h( \Xi^{z!}_n \cap S_{n,z}  )  \big|  \right] &\le e^{-s} \E \big[\Xi^{z!}_n ( T_{n,z} )\big],  \\
\E\left[  \big| P_s h( \Xi_n \cap B  + \delta_z )  -  P_s h( \Xi_n \cap S_{n,z}  + \delta_z)  \big|  \right] &\le e^{-s} \E \big[\Xi_n ( T_{n,z})\big],
\\
\E\left[  \big| P_s h( \Xi^{z!}_n \cap B  +\delta_z )  -  P_s h( \Xi^{z!}_n  \cap S_{n,z} +\delta_z )  \big|  \right] &\le e^{-s} \E \big[\Xi^{z!}_n ( T_{n,z} )\big].
\end{align*}
Inserting these bounds in \eqref{e:split1} gives 
\begin{align}
\begin{split}
& \Big| \E \big[ D_z P_s h(\Xi_n \cap B )\big] - \E \big[D_z P_s h(\Xi_n^{z!} \cap B)\big] \Big| \\  
 &\le \left| \E \left[ P_s h( \Xi_n \cap S_{n,z} )  -  P_s h( \Xi_n^{z!}  \cap S_{n,z} )  \right]\right| \\
  &  \quad + 
\left| \E\left[  P_s h( \Xi_n \cap S_{n,z}  + \delta_z )  -  P_s h( \Xi_n^{z!} \cap S_{n,z} + \delta_z ) \right] \right|  +2  e^{-s} \left(\E \big[\Xi_n ( T_{n,z} )\big] + \E \big[\Xi^{z!}_n ( T_{n,z} )\big]\right). 
\label{e:split2}
\end{split}
\end{align}

Note that $\Xi_n[\omega] \cap S_{n,z} $ depends on $\omega$ only through
\[ \omega \cap \big(\sqrt nS_{n,z} \oplus B_{r_0}(0) \big), \qquad r_0:=\sup_{w \in \sqrt n S_{n,z}} r_n(w),\]
where $\oplus$ denotes the Minkowski sum. 
 By Lemma \ref{lem:int}, there exists $C(t,\kappa)> 1 $ such  that \[ \sqrt nS_{n,z} \oplus B_{r_0} \subset B_{r_n(z)}(\sqrt n z)^c\]  for $n \ge C$. Then by \cite[(2.9)]{BSY21} and the definitions of $\tilde \Xi_n$ and $\tilde \Xi_n^{z!}$ in the statement of the lemma, 
\begin{align}\label{e:measureremark}
\begin{split}
 \left| \E \left[ P_s h( \Xi_n  \cap S_{n,z} )  -  P_s h( \Xi_n^{z!}   \cap S_{n,z} )  \right]\right| 
& \le e^{-s} \E[(\tilde\Xi_n \Delta \tilde\Xi_n^{z !}) (S_{n,z})]  \le \sup_{z \in  B}  e^{-s} \E[(\tilde\Xi_n \Delta \tilde\Xi_n^{z !}) (S_{n,z})], \\
 \left| \E\left[  P_s h( \Xi_n  \cap S_{n,z}  + \delta_z )  -  P_s h( \Xi_n^{z!}    \cap S_{n,z} + \delta_z ) \right] \right|
& \le e^{-s} \E[\tilde(\Xi_n \Delta \tilde\Xi_n^{z !}) (S_{n,z})] \le  \sup_{z \in  B}  e^{-s} \E[(\tilde\Xi_n \Delta \tilde\Xi_n^{z !}) (S_{n,z})] .
\end{split}
\end{align}
Inserting this into \eqref{e:split2}, and integrating over $s$ in \eqref{e:poissoncoupling}, we obtain the conclusion. 
\end{proof}

\begin{remark}
In \eqref{e:measureremark}, we took suprema over $ B$ to ensure that our upper bounds are measurable as functions of the $z\in B$ appearing on the differences on the left-hand sides of the inequalities (and hence can be integrated with respect to $L_n(\d z)$). These suprema are constants in $z$, and hence trivially measurable. In contrast, it does not seem apparent from the construction of our couplings that $z \mapsto \E[(\tilde\Xi_n \Delta \tilde\Xi_n^{z !}) (S_{n,z})]$ is measurable. 
\end{remark}
We are now ready for the proofs of our main results.
\begin{proof}[Proof of Theorem \ref{th}]
		Fix a Borel set $B \subset B_1(0)$ with $\sup_{z \in B} |z|<s$. 
We have already proved \eqref{e:rcnlimit} in Lemma~\ref{lem:int}, so we turn to \eqref{e:KRthm}.

We use \eqref{eq:KRbou} and bound the three terms in the curly brackets separately.
Using the definitions of $L_n$ and $\Xi_n$, the first term is 
\begin{align*}
\int_{B}  \E[\Xi_n(T_{n,z})] L_n(\d z) &=\int_{\sqrt nB} \int_{\sqrt nT_{n,z}} \P(\xi_n^{z!}(B_{r_n}(z))=0) \P(\xi_n^{w!}(B_{r_n}(w))=0) \rho_n(z) \rho_n(w) \, \d w \, \d z.
\end{align*}
Since $n\P(\xi_n^{z!}(B_{r_n}(z))=0) \rho_n(z)=\kappa$ for all $z \in B$ by the definition of $r_n$, this integral equals
\[
\int_{B}  \E[\Xi_n(T_{n,z})] L_n(\d z)  = 
\kappa^2 |B| |T_{n,z}|=\frac{\pi \kappa^2 |B| (\log n)^2}{n}.
\]

Next, we consider the second term in the curly brackets in \eqref{eq:KRbou}. By Remark~\ref{r:XiPalm} and \eqref{defPalm0},
\begin{align}
	&\int_{B}  \E[\Xi_n^{z!}(T_{n,z})] L_n(\d z)\notag \\
	&=\int_{B}  \E[\Xi_n[\xi_n^{z,\varnothing}](T_{n,z}\setminus \{z\})] L_n(\d z)\notag \\
	&=\E\Big[\int_{\sqrt n B} \int_{\sqrt n T_{n,z}} \one\{(\xi_n-\delta_w)(B_{r_n}(w)) =0\} \one\{(\xi_n-\delta_z)(B_{r_n}(z)) =0\} (\xi_n-\delta_z) (\d w)\,  \xi_n(\d z)\Big].\label{e:prev1}
\end{align}
Now we use that $(\xi_n-\delta_w)(B_{r_n}(w)) =0$ and $(\xi_n-\delta_z)(B_{r_n}(z)) =0$ if and only if both 
\[ (\xi_n-\delta_w-\delta_z)(B_{r_n}(w) \cup B_{r_n}(z)) =0, \qquad |z-w|\ge \max\{r_n(w),r_n(z)\}\]
hold.  Therefore, by using \eqref{defPalm} twice in succession, the expectation in \eqref{e:prev1} equals 
\begin{align}\label{e:2ndterm}
\int_{\sqrt n B} \int_{\sqrt n T_{n,z}} \P(\xi_n^{z!, w!}(B_{r_n}(z) \cup B_{r_n}(w))=0)\one\{|z-w|\ge \max\{r_n(w),r_n(z)\}\}  \rho_n^{(2)}(z,w) \, \d w \, \d z,
\end{align}
where we recall from Section~\ref{sec:coup} that $\xi_n^{z!, w!}:=(\xi_n^{z!})^{w!}$. Since the reduced Palm process of a determinantal process is again determinantal (see \eqref{KPalm}), $\xi_n^{z!, w!}$ is a determinantal process and therefore has negative associations. Assume without loss of generality that $r_n(w) \ge r_n(z)$. Then by \eqref{defNA} (applied to indicator functions for the disjoint sets $B_{r_n}(z)$ and $B_{r_n}(w)\setminus B_{r_n}(z)$),
\begin{equation}\label{e:BTintegral}
  \P(\xi_n^{z!, w!}(B_{r_n}(z) \cup B_{r_n}(w))=0) \le \P(\xi_n^{z!, w!}(B_{r_n}(z))=0) \P(\xi_n^{z!, w!}(B_{r_n}(w) \setminus B_{r_n}(z))=0).
\end{equation}
Using the coupling $(\xi_n^{z!},\xi_n^{z!,w!})$ given in Section~\ref{sec:coup}, we bound the right-hand side of \eqref{e:BTintegral} by 
\[
 \P(\xi_n^{z!}(B_{r_n}(z))\le 1) \P(\xi_n^{z!}(B_{r_n}(w) \setminus B_{r_n}(z))\le 1).
\]
With the same idea applied to $(\xi_n, \xi_n^{z!})$ (again using the coupling specified in Section~\ref{sec:coup}), we have
\begin{equation}\label{e:exclambounds}
 \P(\xi_n^{z!}(B_{r_n}(z))\le 1) \P(\xi_n^{z!}(B_{r_n}(w) \setminus B_{r_n}(z))\le 1) \le \P(\xi_n(B_{r_n}(z))\le 2) \P(\xi_n(B_{r_n}(w) \setminus B_{r_n}(z))\le 2).
\end{equation}
Fix $\epsilon>0$. We obtain from Lemma \ref{lem:mom2} and Lemma \ref{lem:int} that there exists $C(s, \epsilon, \kappa) > 1$ such that for all $n \ge C$, 
\[\P(\xi_n(B_{r_n}(z))\le 2) \le  n^{\varepsilon/2-1}, \qquad \P(\xi_n(B_{r_n}(w) \setminus B_{r_n}(z))\le 2)\le n^{\varepsilon/2-1/16},\] where the second inequality follows from the inclusion \[B_{r_n(w)/2}\left( w+\frac {w-z}{2}\right) \subset B_{r_n}(w) \setminus B_{r_n}(z),\] since $r_n(w) \ge r_n(z)$ by assumption. Since $\rho_n^{(2)} \le 1/\pi^2$ (by a direct calculation using the explicit form of the kernel),  
we conclude from \eqref{e:prev1}, \eqref{e:2ndterm},  \eqref{e:BTintegral}, and \eqref{e:exclambounds} that
$$
	\int_{B}  \E[\Xi_n^{z!}(T_{n,z})] L_n(\d z) \le \frac{n^{\epsilon-1/16}|\sqrt{n} B||\sqrt nT_{n,0}|}{\pi^2}=\frac{n^{\epsilon-1/16}(\log n)^2|B|}{\pi}.
$$

Finally, we consider the integrated expected symmetric difference term on the right-hand side of \eqref{eq:KRbou}.  
Recall that $S_{n,z}:=B \setminus T_{n,z}$ for $z \in B_1(0)$. From the definitions of $\tilde\Xi_n$ and $\tilde\Xi_n^z$ in the statement of Lemma~\ref{l:august} (and the facts noted following \eqref{e:split2}), we obtain that almost surely,
\begin{align}
(\tilde\Xi_n \Delta \tilde\Xi_n^z) (S_{n,z})&\le (\xi_n \Delta  \xi_n^{\sqrt nz,\varnothing}) (\sqrt n S_{n,z})\label{eq:symm1}\\ &\quad+  \sum_{w \in \xi_n \cap  \xi_n^{\sqrt n z,\varnothing}\cap (\sqrt nS_{n,z})} \big|\one \{ \xi_n^{\sqrt nz,\varnothing}(B_{r_n}(w))=0\}-\one \{\xi_n(B_{r_n}(w))=0\} \big|.\label{eq:symm2}
\end{align}

We begin by bounding the expectation of the right-hand side of \eqref{eq:symm1}. From the coupling indicated in Section~\ref{sec:coup}, we have 
\[ \xi_n^{\sqrt n z!}  |_{B_{r_n}(\sqrt n z)^c}  \subset \xi_n |_{B_{r_n}(\sqrt n z)^c}, \qquad \xi_n^{\sqrt n z!} |_{B_{r_n}(\sqrt n z)^c}\subset  \xi_n^{\sqrt{n} z!,\varnothing} |_{B_{r_n}(\sqrt n z)^c}. \]  Using these inclusions in combination with  $B_{r_n}(\sqrt n z) \subset \sqrt nT_{n,z}$, we have 
\begin{align}
	\E [(\xi_n \Delta  \xi_n^{\sqrt n z,\varnothing}) (\sqrt nS_{n,z})]&\le \E[(\xi_n \setminus \xi_n^{\sqrt n z!})(\sqrt nS_{n,z})] +\E[( \xi_n^{\sqrt{n} z!,\varnothing} \setminus \xi_n^{\sqrt n z!})(\sqrt nS_{n,z})] \nonumber\\
	&=\E [\xi_n (\sqrt nS_{n,z})]- \E[ \xi_n^{\sqrt n z!}(\sqrt n S_{n,z})] \label{etabou2a}\\
	&\quad +\E[ \xi_n^{\sqrt n z!,\varnothing }(\sqrt nS_{n,z})]-\E[\xi_n^{\sqrt n z!}(\sqrt nS_{n,z})].\label{etabou2b}
\end{align}
Using Lemma~\ref{l:Kbound}, \eqref{KPalm}, and  $K_n(z,z)=\rho_n(z)\ge \frac 1{2\pi}$ uniformly for all $|z| < s$  for $n \ge C(s)$, we can bound the difference in \eqref{etabou2a} for every $z \in B$ by
\begin{align}
\E [\xi_n (\sqrt nS_{n,z})]- \E[ \xi_n^{\sqrt n z!}(\sqrt n S_{n,z})] & \le 2 \pi \int_{\sqrt nS_{n,z}} |K_n(\sqrt nz,w)|^2\,  \d w \notag  \\ &\le 2 \pi |B| n \left(  \sup_{w \in \sqrt nS_{n,z}} e^{-|w-\sqrt n z|^2/2}+O(e^{-cn}) \right)\nonumber\\
&\le \frac{2 \pi |B|}{{n^2}}+O(e^{-cn}).
\label{MRdens}
\end{align}

 To bound \eqref{etabou2b},  we use that $L_n(\d z)= \kappa\, \d z$ for all $z \in B_0(s)$ and $n \ge C(s,\kappa)$ by the definition of $L_n$ and Lemma~\ref{l:requality}. Together with \eqref{defPalm} and \eqref{defPalm0}, this gives for all Borel sets $A \subset B_1(0)$ that 
\begin{align*}
	&n\int_A \E[\xi_n^{\sqrt n z!,\varnothing}(\sqrt{n}S_{n,z})] \P(\xi_n^{\sqrt{n}z!}(B_{r_n}(\sqrt{n}z))=0)  \rho_n(\sqrt{n}z) \d z\\
	&\quad=\E\Big[\int_{\sqrt{n} A}(\xi_n-\delta_z)(\sqrt{n}S_{n,z/\sqrt{n}}) \one\{(\xi_n-\delta_z)(B_{r_n}(z))=0\} \xi_n(\d z) \Big]\\
	&\quad=\int_{\sqrt n A} \E\Big[\xi_n^{z!} (\sqrt n S_{n,z/\sqrt n}) \one\{\xi_n^{z!}(B_{r_n}(z))=0\}\Big] \rho_n(z) \d z\\
	&\quad=n\int_{A} \E\Big[\xi_n^{\sqrt n z!} (\sqrt n S_{n,z}) \one\{\xi_n^{\sqrt n z!}(B_{r_n}(\sqrt n z))=0\}\Big] \rho_n(\sqrt n z) \d z,
\end{align*}
where the final equality follows from a change of variables. 
Since $A$ is arbitrary, the integrands must agree for $L_n$-almost all $z \in B$.
Hence, we find that \eqref{etabou2b} becomes 
\begin{align}
	&\E[ \xi_n^{\sqrt n z!,\varnothing}(\sqrt nS_{n,z})]-\E[\xi_n^{\sqrt nz!}(\sqrt nS_{n,z})]\nonumber\\
	&\quad={\kappa^{-1}n \rho_n(\sqrt n z)\P(\xi_n^{\sqrt n z!}(B_{r_n}(\sqrt nz))=0)}\Big(\E[ \xi_n^{\sqrt n z!,\varnothing}(\sqrt nS_{n,z})]-\E[\xi_n^{\sqrt nz!}(\sqrt nS_{n,z})]\Big)\nonumber\\
	&\quad={\kappa^{-1}n \rho_n(\sqrt n z)}\Big(\E \big[\xi_n^{\sqrt nz!}(\sqrt nS_{n,z}) \one\{\xi_n^{\sqrt nz!}(B_{r_n}(\sqrt nz))=0\} \big]\notag \\ & \qquad \qquad \qquad \qquad \qquad  
	-\E \big [\xi_n^{\sqrt nz!} (\sqrt nS_{n,z}) \big]\P(\xi_n^{\sqrt nz!}(B_{r_n}(z))=0)\Big)\nonumber\\
	&\quad={\kappa^{-1}n \rho_n(\sqrt n z)}\Cov\Big(\xi_n^{\sqrt nz!} (\sqrt nS_{n,z}), \one\big\{\xi_n^{\sqrt nz!}\big(B_{r_n}(\sqrt nz)\big)=0\big\} \Big). \label{covbou2}
\end{align}
For $k \in \mathbb{N}$ we consider the auxiliary functions
\begin{align*}
	f^{(k)}(\omega)&:=\min\{k,\omega(B_{r_n}(\sqrt nz))+ \one \{\omega(B_{r_n}(\sqrt nz))=0\}\},\\
	f(\omega)&:=\omega(B_{r_n}(\sqrt nz))+\one\{\omega(B_{r_n}(\sqrt nz))=0\},\quad \omega\in \mathbf{N}.
\end{align*}
It is easy to see that $f^{(k)},\, k \in \mathbb{ N},$ and $f$ are bounded and increasing. Now we use that the reduced Palm process $\xi_n^{\sqrt nz!}$ is itself  a determinantal process and therefore has negative associations (see \eqref{defNA}). Since $r_n\le \log n$ for $n \ge C(s, \kappa)$, by Lemma \ref{lem:int}, we obtain for such $n$ that 
\begin{align*}
	\Cov(\min\{k,\xi_n^{\sqrt nz!}(\sqrt nS_{n,z})\}, f^{(k)}(\xi_n^{\sqrt nz!}))\le 0, \quad k \in \mathbb N. 
\end{align*}
Hence, by monotone convergence,
\begin{align*}
	&\Cov(\xi_n^{\sqrt nz!}(\sqrt nS_{n,z}),\xi_n^{\sqrt nz!}(B_{r_n}(\sqrt nz)))+\Cov(\xi_n^{\sqrt nz!}(\sqrt nS_{n,z}), \one \{\xi_n^{\sqrt nz!}(B_{r_n}(\sqrt nz))=0\})\nonumber\\
	&\quad=\Cov(\xi_n^{\sqrt nz!}(\sqrt nS_{n,z}), f(\xi_n^{\sqrt nz!}))\\
	&\quad =\lim_{k \to \infty} \Cov(\min\{k,\xi_n^{\sqrt nz!}(\sqrt nS_{n,z})\}, f^{(k)}(\xi_n^{\sqrt nz!})) \le 0.
\end{align*}
This shows that \eqref{covbou2} is bounded by
	\begin{align}
		&-\kappa^{-1}n \rho_n(\sqrt n z)\Cov(\xi_n^{\sqrt nz!}(\sqrt nS_{n,z}),\xi^{\sqrt nz!}(B_{r_n}(\sqrt nz)))\nonumber\\
		&\quad \le \kappa^{-1}n \rho_n(\sqrt n z) \big|\E [\xi_n^{\sqrt nz!} (\sqrt nS_{n,z})] - \E[\xi_n(\sqrt nS_{n,z})]\big| \E[\xi_n^{\sqrt nz!}(B_{r_n}(\sqrt nz))]	\label{covxixi1a}\\
		&\qquad+ \kappa^{-1}n \rho_n(\sqrt n z) \big|\E[\xi_n^{\sqrt nz!}(\sqrt nS_{n,z})\xi_n^{\sqrt nz!}(B_{r_n}(\sqrt{n} z))] - \E[\xi_n(\sqrt nS_{n,z})]\E[\xi_n^{\sqrt nz!}(B_{r_n}(\sqrt nz))]\big|.
		\label{covxixi1b}
	\end{align}
	To bound \eqref{covxixi1a}, we combine the estimate  \eqref{MRdens} for the difference in absolute value signs with the bound \[\E[\xi_n^{\sqrt nz!}(B_{r_n}(\sqrt nz))] \le \E[\xi_n(B_{r_n}(\sqrt nz))] \le r_n(z)^2,\] 
where the first inequality follows from \eqref{Palmdom} and the second follows from the elementary bound $\rho_n \le \pi^{-1}$. 
Hence, by Lemma \ref{lem:int} and the bound $\rho_n(\sqrt n z) \le \frac 1 \pi$,  we have using \eqref{MRdens} that for all $z \in B$, 
\begin{align}
&\kappa^{-1}n \rho_n(\sqrt n z) 	\big|\E [\xi_n^{\sqrt nz!} (\sqrt nS_{n,z})] - \E[\xi_n(\sqrt n S_{n,z})]\big| \E[\xi_n^{\sqrt nz!}(B_{r_n}(\sqrt nz))]\nonumber\\
&\quad  \le{ \frac{4  |B|\sqrt{\log n}}{cn}}+O(e^{-cn}).\label{eq:cont2}
\end{align}

	Next we consider \eqref{covxixi1b}. We write $\rho_{n,z}^{(m)}$ for the $m$-th correlation function of $\xi_n^{z!}$. 
	By \cite[Lemma 6.4]{ST03},
	\begin{equation}\label{e:st03}
	\rho_{n,z}^{(2)}(w,v)\rho_n(z)  = \rho_n^{(3)}(z,w,v) , \qquad \rho_{n,z}(w)\rho_n(z) = \rho_n^{(2)}(z,w).
	\end{equation}
	 Using the definitions of $\xi^{z!}$ and $L_n$ together with \eqref{e:st03}, we find for $L_n$-almost all $z \in B$, 
	 	\begin{align*}
	 	& \kappa^{-1}n \rho_n(\sqrt n z)\E[\xi_n^{\sqrt nz!}(\sqrt nS_{n,z})\xi_n^{\sqrt nz!}(B_{r_n}(\sqrt n z))] - \E[\xi_n(\sqrt nB\setminus \sqrt nT_{n,z})]\E[\xi_n^{\sqrt nz!}(B_{r_n}(\sqrt nz))]\\
	 	&\quad = \kappa^{-1}n \rho_n(\sqrt n z) \int_{B_{r_n}(\sqrt n z)}\int_{\sqrt n S_{n,z}}  (\rho_{n,\sqrt n z}^{(2)}(w,v) -  \rho_{n,\sqrt n z}(w) \rho_n(v)) \,\d v\,  \d w \\ 
	 	&\quad=\kappa^{-1}n\int_{B_{r_n}(\sqrt n z)}\int_{\sqrt n S_{n,z}} \rho_n^{(3)}(\sqrt n z,w,v) -  \rho_n^{(2)}(\sqrt n z,w) \rho_n(v) \,\d v\,  \d w .
	 \end{align*}
	 	Here we apply \eqref{eq:rhodec} to the difference in the integral. Uniformly for $z\in B$, this gives by Lemma \ref{lem:int},
	 \begin{align}
	 	&\kappa^{-1}n \left| \int_{B_{r_n}(\sqrt n z)}\int_{\sqrt n S_{n,z}} \rho_n^{(3)}(\sqrt n z,w,v) -  \rho_n^{(2)}(\sqrt n z,w) \rho_n(v) \,\d v\,  \d w \right| \nonumber\\ 
	    &\quad \le \kappa^{-1} 3^{5/2}\pi |B| n^2 \sup_{z \in \sqrt n B} r_n(z)^2 \sup_{\substack{v,w \in \sqrt nB\\ |v-w| \ge \log n-r_n(\sqrt n z)}} |K_n(w,v)|\nonumber\\
	 	&\quad  \le  \kappa^{-1} 3^{5/2} \pi |B| n^2 2 \sqrt{\log n} \sup_{\substack{v,w \in \sqrt nB\\ |v-w| \ge \log n-2 \sqrt {\log n}}} e^{-|w-v|^2/2}+ O(e^{-cn})\le \frac{2 \cdot 3^{5/2} \pi |B|}{cn}+O(e^{-cn}).\label{eq:cont3}
	 \end{align}
Hence, we obtain by adding the contributions from \eqref{MRdens}, \eqref{eq:cont2} and \eqref{eq:cont3} that for all $z \in B$, 
\begin{align}
	\E [(\xi_n \Delta \xi_n^{z,\varnothing}) (\sqrt nS_{n,z})] \le 2|B|\Big( \frac{ \pi}{n^2} +  \frac{2 \sqrt{\log n} + 3^{5/2} \pi }{cn} \Big) +O(e^{-cn}). \label{eq:xisymmbou}
\end{align}
This completes the bound of the expectation of the right-hand side of \eqref{eq:symm1}.

Finally, we consider the sum in \eqref{eq:symm2}. For all $z \in B$ it holds almost surely that
	\begin{align*}
		&\sum_{w \in \xi_n^{z,\varnothing} \cap \xi_n \cap (\sqrt n B\setminus \sqrt nT_{n,z})} \big|\one \{\xi_n^{z,\varnothing}(B_{r_n}(w))=0\}-\one \{\xi_n(B_{r_n}(w))=0\} \big|\\
		&\quad \le \sum_{w \in \xi_n^{z,\varnothing} \cap \xi_n \cap (\sqrt n B\setminus \sqrt n T_{n,z})} (\xi_n^{z,\varnothing} \Delta \xi_n) (B_{r_n}(w)) 
\max\big(\one\{\xi_n^{z,\varnothing}(B_{r_n}(w)=0)\}, \one\{\xi_n(B_{r_n}(w)=0)\}\big).
	\end{align*}
By Lemma \ref{lem:int} there exists $\delta>0$ such that $B_{r_n}(w)\subset B_{(1-\delta)\sqrt n}(0)$ for all $w \in B$ and $n\ge C(s, \kappa)$. Hence, the previous sum is bounded by
\begin{align}
\begin{split}
\label{e:bound29}
		&\sum_{v \in \xi_n^{z,\varnothing} \Delta \xi_n}  \one\{v \in B_{(1-\delta)\sqrt n}(0) \setminus \sqrt n T_{n,z}\} \sum_{w \in \xi_n^{z,\varnothing} \cap \xi_n \cap B_{r_n(w)}(v)} \max\{\one\{\xi_n^{z,\varnothing}(B_{r_n}(w))=0\}, \one\{\xi_n(B_{r_n}(w))=0\}\} \\
		&\quad \le \sum_{v \in \xi_n^{z,\varnothing} \setminus \xi_n} \one\{v \in B_{(1-\delta)\sqrt n}(0) \setminus \sqrt n T_{n,z}\}\sum_{w \in \xi_n \cap B_{r_n(w)}(v)} \one\{\xi_n(B_{r_n}(w))=0\} \\
		&\quad \quad +\sum_{v \in \xi_n \setminus \xi_n^{z,\varnothing}} \one\{v \in B_{(1-\delta)\sqrt n}(0) \setminus \sqrt n T_{n,z}\}\sum_{w \in \xi_n^{z,\varnothing} \cap B_{r_n(w)}(v)} \one\{\xi_n^{z,\varnothing}(B_{r_n}(w))=0\}.
\end{split}
\end{align}
Using 
	\[\lim_{n \rightarrow \infty} \sup_{v,w \in B_{(1-\delta)\sqrt n}(0)} \frac{r_n(v)}{r_n(w)}  = 1,\] from Lemma \ref{lem:int}, we find by a straightforward volume argument (considering how many balls with radius $r_n(w)/2$ fit into a ball with radius $2 r_n(v)$) that 
	\[ \sum_{w\in \omega \cap B_{r_n}(v)} \one\{\omega(B_{r_n}(w))=0\} \le 16\] for all locally finite $\omega \in \mathbf N$ and $n$ large enough. Inserting this estimate into \eqref{e:bound29} gives that \eqref{eq:symm2} is almost surely bounded by 
	$$
 	16 (\xi_n^{z,\varnothing} \Delta \xi_n) (B_{(1-\delta)\sqrt n}(0)\setminus \sqrt nT_{n,z}).
	$$
	Hence, by \eqref{eq:xisymmbou} (with $B$ replaced by $B_{(1-\delta)\sqrt n}(0)$), we have for all $z\in B$ that \eqref{eq:symm2} is bounded by
	$$
	 32|B_{1-\delta}(0)| \Big( \frac{ \pi}{n^2} +  \frac{2 \sqrt{\log n} + 3^{5/2} \pi }{cn} \Big) +O(e^{-cn}).
	$$
This completes the bound for \eqref{eq:symm2}, and hence completes the proof. 
\end{proof}

\begin{proof}[Proof of Corollary~\ref{cor:maxdistance}]
Fix $\epsilon >0$. We first show that \[\P\left(\max\limits_{z \in \xi_n \cap \sqrt nB} \min\limits_{w \in \xi_n\setminus \{z\}} |z-w| \ge (\sqrt{2}+\epsilon)\sqrt[4]{\log n}\right) \to 0\] as $n \to \infty$. Using the definition of $\Xi_{n}$, that $L_n$ is the intensity measure for $\Xi_{n}$, and $\rho_n(z) \le \pi^{-1}$ for all $z \in B$, we have
\begin{align}
\P\Big(\max_{z \in \xi_n \cap \sqrt nB}  \min\limits_{w \in \xi_n\setminus \{z\}} |z-w| \ge (\sqrt{2}+\epsilon)\sqrt[4]{\log n}\Big) &\le \mathbb E \sum_{z \in \xi_n \cap \sqrt nB} \one\Big\{ \min\limits_{w \in \xi_n\setminus \{z\}} |z-w| \ge (\sqrt{2}+\epsilon)  \sqrt[4]{\log n} \Big\} \notag \\
&= \int_{\sqrt n B} \P(\xi_n^{z!}(B_{(\sqrt{2}+\epsilon)\sqrt[4]{ \log n}}(z))=0) \rho_n(z)\,  \d z\notag \\
&\le  
\frac{n |B|}{\pi} \sup_{z \in \sqrt n B} \P(\xi_n(B_{(\sqrt{2}+\epsilon)\sqrt[4]{ \log n}}(z))\le 1),\label{e:cor2tozero}
\end{align}
where in the last inequality we used the coupling of of $\xi_n^{z!}$ and $\xi_n$ indicated in Section~\ref{sec:coup}.
By Lemma \ref{lem:mom2} with $s_n = (\sqrt{2}+\epsilon)\sqrt[4]{\log n}$ we have  for $n \ge C(\epsilon, B)$ that 
\[
\sup_{z \in \sqrt n B} \P(\xi_n(B_{(\sqrt{2}+\epsilon)\sqrt[4]{\log n}}(z))\le 1) \le n^{-(\sqrt{2}+\epsilon)^4 /4 }.\] Hence, \eqref{e:cor2tozero} tends to $0$ as $n \to \infty$.

Next, we show that \[\P\Big(\max\limits_{z \in \xi_n \cap \sqrt nB}  \min\limits_{w \in \xi_n\setminus \{z\}} |z-w| \le (\sqrt{2}-\epsilon) \sqrt[4]{\log n}\Big) \to 0.\]
 Fix $\kappa>0$. By \eqref{e:rcnlimit}, we have $(\sqrt{2}-\epsilon)  \sqrt[4]{\log n} \le r_{n}(z)$ for $n\ge C(\epsilon, \kappa, B)$.  
  Therefore,
\begin{align*}
	\P\Big(\max_{z \in \xi_n \cap \sqrt nB} \min_{w \in \xi_n\setminus \{z\}} |z-w| \le (\sqrt{2}-\varepsilon) \sqrt[4]{\log n} \Big) &\le \P\Big(\max_{z \in \xi_n \cap \sqrt nB} \min_{w \in \xi_n\setminus \{z\}} |z-w| \le  r_{n}(z) \Big)\\
	&=\P(\Xi_{n} \cap B =\varnothing),
\end{align*}
which tends to $\P(\zeta_{\kappa} \cap B= \varnothing) = \exp(-\kappa |B|)$ as $n \to \infty$ by Theorem \ref{th}, since convergence in KR distance implies convergence in distribution for locally finite point processes (see \cite[Proposition 2.1]{DST16}). 
Since $\kappa$ was arbitrary, this upper bound can be made arbitrarily small by taking $\kappa$ sufficiently large. 
\end{proof}
\noindent
{\bf Acknowledgments.} 
PL was partially supported by NSF grant DMS-2450004.
MO was supported by the NWO Gravitation project NETWORKS under grant agreement no.~024.002.003. The authors thank the anonymous referees for their many helpful comments.

\bibliographystyle{amsplain}

\bibliography{ngin}

\end{document}